\DeclareMathAlphabet{\mathpzc}{OT1}{pzc}{m}{it}
\def\BC{\mathbb{C}}
\def\BZ{\mathbb{Z}}
\def\sA{\mathsf{A}}
\def\sB{\mathsf{B}}
\def\sC{\mathsf{C}}
\def\sR{\mathsf{R}}
\def\add{\operatorname{add}}
\def\adots{\mathinner{\mkern1mu\raise1.0pt\vbox{\kern7.0pt\hbox{.}}\mkern2mu\raise4.0pt\hbox{.}\mkern2mu\raise7.0pt\hbox{.}\mkern1mu}}
\def\ast{{\textstyle *}}
\def\corad{\operatorname{corad}}
\def\dddots{\mathinner{\mkern1mu\raise10.0pt\vbox{\kern7.0pt\hbox{.}}\mkern2mu\raise5.3pt\hbox{.}\mkern2mu\raise1.0pt\hbox{.}\mkern1mu}}
\def\dddotssmall{\mathinner{\mkern1mu\raise7.0pt\vbox{\kern7.0pt\hbox{.}}\mkern-1mu\raise4pt\hbox{.}\mkern-1mu\raise1.0pt\hbox{.}\mkern1mu}}
\def\dim{\operatorname{dim}}
\def\dual{\operatorname{D}}
\def\Ext{\operatorname{Ext}}
\def\fl{\mathsf{fl}}
\def\Gr{\operatorname{Gr}}
\def\Hom{\operatorname{Hom}}
\def\id{\operatorname{id}}
\def\Image{\operatorname{Im}}
\def\ind{\operatorname{ind}}
\def\K{\operatorname{K}}
\def\Ker{\operatorname{Ker}}
\def\mod{\mathsf{mod}}
\def\Mod{\mathsf{Mod}}
\def\obj{\operatorname{obj}}
\def\opp{\operatorname{op}}
\def\rad{\operatorname{rad}}
\def\SL2{\operatorname{SL}_2}
\numberwithin{equation}{section}
\newtheorem{Lemma}{Lemma}[section]
\newtheorem{Theorem}[Lemma]{Theorem}
\newtheorem{Proposition}[Lemma]{Proposition}
\newtheorem{Corollary}[Lemma]{Corollary}
\theoremstyle{definition}
\newtheorem{Definition}[Lemma]{Definition}
\newtheorem{Setup}[Lemma]{Setup}
\newtheorem{Remark}[Lemma]{Remark}
\newtheorem{bfhpg}[Lemma]{}               
\newtheorem*{bfhpg*}{}
\begin{document}

\setlength{\parindent}{0pt}
\setlength{\parskip}{7pt}
%The default \baselineskip is close to 4.8mm
%\setlength{\baselineskip}{5.8mm}

\title[Generalised friezes and modified Caldero-Chapoton]{Generalised
friezes and a modified Caldero-Chapoton map depending on a rigid object}

\author{Thorsten Holm}
\address{Institut f\"{u}r Algebra, Zahlentheorie und Diskrete
Mathematik, Fa\-kul\-t\"at f\"ur Mathematik und Physik, Leibniz
Universit\"{a}t Hannover, Welfengarten 1, 30167 Hannover, Germany}
\email{holm@math.uni-hannover.de}
\urladdr{http://www.iazd.uni-hannover.de/\~{ }tholm}

\author{Peter J\o rgensen}
\address{School of Mathematics and Statistics,
Newcastle University, Newcastle upon Tyne NE1 7RU, United Kingdom}
\email{peter.jorgensen@ncl.ac.uk}
\urladdr{http://www.staff.ncl.ac.uk/peter.jorgensen}

%\thanks{Date: \today. A thank you would go here}

\keywords{Auslander-Reiten triangle, cluster category, polygon
  dissection, rigid sub\-ca\-te\-go\-ry, Serre functor, triangulated
category} 

\subjclass[2010]{05E10, 13F60, 16G70, 18E30}
%05E10: Combinatorial aspects of representation theory
%05E15: Combinatorial aspects of groups and algebras
%05E40: Combinatorial aspects of commutative algebra
%05E45: Combinatorial aspects of simplicial complexes
%13D25: Complexes
%13F60: Cluster algebras
%16E10: Homological dimension
%16E45: Differential graded algebras and applications
%16G10: Representations of Artinian rings 
%16G60: Representation type (finite, tame, wild, etc.) 
%16G70: Auslander-Reiten sequences (almost split sequences) and
%       Auslander-Reiten quivers
%18E30: Derived categories, triangulated categories
%18E35: Localization of categories
%18G05: Projectives and injectives
%18G35: Chain complexes
%18G99: Homological algebra: None of the above, but in this section 
%55P62: Rational homotopy theory

\begin{abstract} 

  The (usual) Caldero-Chapoton map is a map from the set of objects of a
  category to a Laurent polynomial ring over the integers.  In the case
  of a cluster category, it maps ``reachable'' indecomposable objects
  to the corresponding cluster variables in a cluster algebra.  This
  formalises the idea that the cluster category is a
  ``categorification'' of the cluster algebra.

  The definition of the Caldero-Chapoton map requires the category to
  be $2$-Calabi-Yau, and the map depends on a cluster tilting object
  in the category.

  We study a modified version of the Caldero-Chapoton map which only
  requires the category to have a Serre functor, and only depends on
  a rigid object in the category.

  It is well-known that the usual Caldero-Chapoton map gives rise to
  so-called friezes, for instance Conway-Coxeter friezes.  We show
  that the modified Caldero-Chapoton map gives rise to what we call
  generalised friezes, and that for cluster categories of Dynkin type
  $A$, it recovers the generalised friezes introduced by combinatorial
  means in \cite{BHJ}.

\end{abstract}

\maketitle

\setcounter{section}{-1}
\section{Introduction}
\label{sec:introduction}

The (usual) Caldero-Chapoton map is an important object in the
homological part of cluster theory, see \cite[3.1]{CC}.  Among other
things, it gives rise to so-called friezes.  In particular,
Conway-Coxeter friezes can be recovered like this, see \cite[sec.\
5]{CC}.

This paper studies a modified version of the Caldero-Chapoton map.  We
show that it gives rise to what we call generalised friezes.  In
particular, the generalised friezes which were introduced by
combinatorial means in \cite{BHJ} can be recovered like this.

%{\em Background. }
\subsection{Background}
We first explain what the usual Caldero-Chapoton map is.  If $Q$ is
a finite quiver without loops and $2$-cycles, then there is a cluster
algebra $A( Q )$ and a cluster category $\sC( Q )$ of type $Q$, see
\cite{BMRRT} and \cite{FZ}.

% Cluster algebras are commutative algebras with many generators, known
% as cluster variables, and many relations, all of a particularly simple
% type.  Cluster categories are triangulated categories which are
% Krull-Schmidt, that is, each object is the direct sum of finitely many
% indecomposable objects which are unique up to isomorphism.

The algebra $A( Q )$ and the category $\sC( Q )$ are linked by the
Caldero-Chapoton map $\rho_T$ which depends on a cluster tilting
object $T \in \sC( Q )$, see \cite{CC}, \cite{CK}, \cite{CK2},
\cite{Palu}, and \cite{Palu2}.  It is a map from the set of objects of
$\sC( Q )$ to a Laurent polynomial ring over $\BZ$.  Its image
generates $A( Q )$ which embeds into Laurent polynomials.  Indeed,
$\rho_T$ maps ``reachable'' indecomposable objects to cluster
variables and formalises the idea that the cluster category is a
``categorification'' of the cluster algebra.

Note that $\rho_T$ can actually be defined on any $2$-Calabi-Yau
category $\sC$ with a cluster tilting object $T$, and that one of its
good properties is that it is a so-called frieze, see \cite[def.\
1.1]{AD}, \cite[prop.\ 3.10]{CC}, and \cite[theorem]{DG}.  This means
that it is a map from the set of objects of $\sC$ to a ring,
satisfying $\rho_T( c_1 \oplus c_2 ) = \rho_T( c_1 )\rho_T( c_2 )$,
such that if $\tau c \rightarrow b \rightarrow c$ is an
Auslander-Reiten (AR) triangle in $\sC$ then
\begin{equation}
\label{equ:fr}
  \rho_T( \tau c )\rho_T( c ) - \rho_T( b ) = 1.
\end{equation}

Moreover, since $\rho_T$ has values in a Laurent polynomial ring over
$\BZ$, setting all the variables equal to $1$ gives a frieze with
values in $\BZ$.

A classic case of this arises for $\sC( A_n )$, the cluster category
of Dynkin type $A_n$.  For example, the AR quiver of $\sC( A_7 )$ is
shown in Figure \ref{fig:AR_quiver}.  The quiver is $\BZ A_7$ modulo a
glide reflection, so the two dotted line segments in the figure should
be identified with opposite orientations.
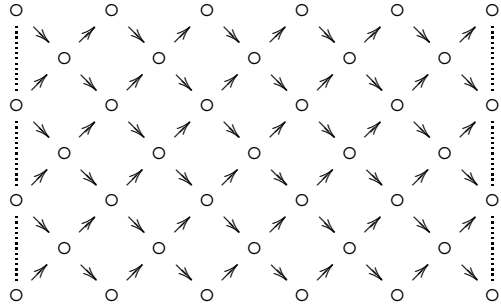
\begin{figure}
\[
  \vcenter{
  \xymatrix @-0.5pc @!0 {
    \circ \ar[dr] \ar@{.}[dd] && \circ \ar[dr] && \circ \ar[dr] && \circ \ar[dr] && \circ \ar[dr] && \circ \ar@{.}[dd] \\
    & \circ \ar[dr] \ar[ur] && \circ \ar[dr] \ar[ur] && \circ \ar[dr] \ar[ur] && \circ \ar[dr] \ar[ur] && \circ \ar[dr] \ar[ur] & \\
    \circ \ar[dr] \ar[ur] \ar@{.}[dd] && \circ \ar[dr] \ar[ur] && \circ \ar[dr] \ar[ur] && \circ \ar[dr] \ar[ur] && \circ \ar[dr] \ar[ur] && \circ \ar@{.}[dd] \\
    & \circ \ar[dr] \ar[ur] && \circ \ar[dr] \ar[ur] && \circ \ar[dr] \ar[ur] && \circ \ar[dr] \ar[ur] && \circ \ar[dr] \ar[ur] & \\
    \circ \ar[dr] \ar[ur] \ar@{.}[dd] && \circ \ar[dr] \ar[ur] && \circ \ar[dr] \ar[ur] && \circ \ar[dr] \ar[ur] && \circ \ar[dr] \ar[ur] && \circ \ar@{.}[dd] \\
    & \circ \ar[dr] \ar[ur] && \circ \ar[dr] \ar[ur] && \circ \ar[dr] \ar[ur] && \circ \ar[dr] \ar[ur] && \circ \ar[dr] \ar[ur] & \\
    \circ \ar[ur] && \circ \ar[ur] && \circ \ar[ur] && \circ \ar[ur] && \circ \ar[ur] && \circ \\
                        }
          }
\]
\caption{The Auslander-Reiten quiver of the cluster category $\sC( A_7 )$.}
\label{fig:AR_quiver}
\end{figure}
Figure \ref{fig:An} shows a $\BZ$-valued frieze, obtained as
described, by giving its values on the indecomposable objects of $\sC(
A_7 )$.
\begin{figure}
\[
  \vcenter{
  \xymatrix @-0.5pc @!0 {
    4 \ar@{.}[dd] && 4 && 1 && 2 && 2 && 4 \ar@{.}[dd] \\
    & 15 && 3 && 1 && 3 && 7 & \\
    11 \ar@{.}[dd] && 11 && 2 && 1 && 10 && 5 \ar@{.}[dd] \\
    & 8 && 7 && 1 && 3 && 7 & \\
    5 \ar@{.}[dd] && 5 && 3 && 2 && 2 && 11 \ar@{.}[dd] \\
    & 3 && 2 && 5 && 1 && 3 & \\
    4 && 1 && 3 && 2 && 1 && 4 \\
                        }
          }
\]
\caption{A frieze on the cluster category $\sC( A_7 )$.  This is also known as a Conway-Coxeter frieze.}
\label{fig:An}
\end{figure}
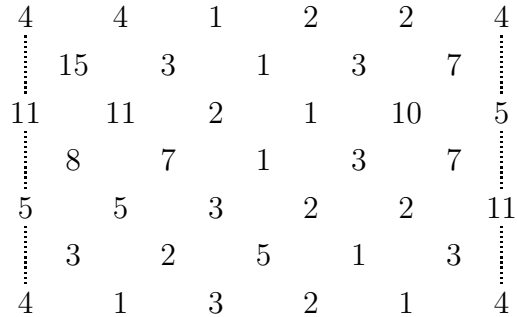
Observe that Equation \eqref{equ:fr} implies that if
\begin{equation}
\label{equ:diamond}
\vcenter{
  \xymatrix @-2.2pc {
    & \beta & \\ \alpha & & \delta \\ & \gamma &
                    }
        }
\end{equation}
is a ``diamond'' in the frieze, then $\alpha\delta - \beta\gamma = 1$.
This is because such a diamond corresponds to a ``mesh'' in the AR
quiver, hence to an AR triangle.

Friezes like this are known as Conway-Coxeter friezes and were studied
long before cluster theory, see \cite{CC1} and \cite{CC2}.  They can
also be defined by combinatorial means based on triangulations of
polygons, see \cite{BCI}.

%{\em This paper. }
\subsection{This paper}
We will study a modified version of the Caldero-Chapoton map which
does not require the category $\sC$ to be $2$-Calabi-Yau, but merely
that it has a Serre functor.  Moreover, it does not depend on a
cluster tilting object $T$, but on a rigid object $R$, that is, an
object satisfying the weaker condition $\sC( R,\Sigma R ) = 0$.  Note
that $\sC( -,- )$ is shorthand for the $\Hom$-functor in $\sC$.

To be precise, let $\BC$ be the field of complex numbers, $\sC$ an
essentially small $\BC$-linear $\Hom$-finite triangulated category with
split idempotents and a Serre functor, $R \in \sC$ a rigid object, and
$E = \sC( R,R )$ the endomorphism algebra.  Consider the category
$\Mod\, E$ of $E$-right-modules and the functor
\begin{equation}
\label{equ:ampersand}
  \begin{array}{ccc}
    \sC & \stackrel{G}{\longrightarrow} & \Mod\, E, \\[2mm]
    c   & \longmapsto                   & \sC( R,\Sigma c ).
  \end{array}
\end{equation}
Note that $G$ actually has values in $\mod\, E$, the category of
$E$-modules which are finite dimensional over $\BC$, but we prefer to
view it as having values in $\Mod\, E$ because of a later
generalisation. 

The modified Caldero-Chapoton map determined by $R$ is given by the
following formula.
\[
  \rho_R( c ) = \sum_e \chi\big( \Gr_e( Gc ) \big)
\]
Here $c \in \sC$ is an object, $\Gr_e( Gc )$ is the Grassmannian of
$E$-submodules $M \subseteq Gc$ with $\K_0$-class satisfying $[M] =
e$, and $\chi$ is the Euler characteristic defined by cohomology with
compact support, see \cite[p.\ 93]{F}.  The sum is over $e \in \K_0(
\mod\, E )$.  This gives a map $\rho_R : \obj\, \sC \rightarrow \BZ$.

One of our main results is the following.

{\bf Theorem A. }
{\it
The map $\rho_R : \obj\, \sC \rightarrow \BZ$ is a generalised frieze.
That is,  
\begin{enumerate}

  \item  $\rho_R( c_1 \oplus c_2 ) = \rho_R( c_1 )\rho_R( c_2 )$.

\medskip

  \item  If $\Delta = \tau c \rightarrow b \rightarrow c$ is an AR
  triangle in $\sC$, then the difference $\rho_R( \tau c )\rho_R( c ) -
  \rho_R( b )$ equals $0$ or $1$.

\end{enumerate}
}

In fact, the difference in part (ii) is $0$ or $1$ depending on
whether $G( \Delta )$ is a split short exact sequence or not.  If the
difference in part (ii) were always $1$, then $\rho_R$ would be a
frieze in the earlier sense.

The idea of permitting the difference to be $0$ or $1$ occurred in
\cite{BHJ}, where generalised friezes on $\sC( A_n )$ were introduced
by purely combinatorial means based on higher angulations of polygons;
see paragraph \ref{bfhpg:BHJ} for details.  For example, Figure
\ref{fig:An2} shows the values of such a generalised frieze on the
indecomposable objects of $\sC( A_7 )$.  Note that for each
``diamond'' as in Equation \eqref{equ:diamond} we have $\alpha\delta -
\beta\gamma$ equal to $0$ or $1$.
\begin{figure}
\[
  \vcenter{
  \xymatrix @-0.5pc @!0 {
    3 \ar@{.}[dd] && 2 && 1 && 1 && 2 && 2 \ar@{.}[dd] \\
    & 6 && 2 && 1 && 2 && 4 & \\
    6 \ar@{.}[dd] && 6 && 1 && 1 && 4 && 4 \ar@{.}[dd] \\
    & 6 && 3 && 1 && 2 && 4 & \\
    4 \ar@{.}[dd] && 3 && 2 && 1 && 2 && 6 \ar@{.}[dd] \\
    & 2 && 2 && 2 && 1 && 3 & \\
    2 && 1 && 2 && 1 && 1 && 3 \\
                        }
          }
\]
\caption{A generalised frieze on the cluster category $\sC( A_7 )$, as
introduced in \cite{BHJ}.}
\label{fig:An2}
\end{figure}

It is another main result that the generalised friezes of \cite{BHJ}
can be recovered from the modified Caldero-Chapoton map.

%\vbox{
{\bf Theorem B. }
{\it
Let $\sC = \sC( A_n )$ be the cluster category of type $A_n$.  

It follows from \cite{CCS} that a rigid object $R \in \sC$ without
repeated indecomposable summands corresponds to a polygon dissection
of an $( n+3 )$-gon $P$.

By \cite{BHJ} such a polygon dissection defines a generalised
frieze on $\sC$, and this generalised frieze equals $\rho_R$.
}
%}

Note that it is not explicit in \cite{BHJ} that its generalised
friezes are defined on $\sC( A_n )$, but it is established that they
have the requisite periodicity to be so.  Moreover, \cite{BHJ}
requires that $R$ corresponds not just to a polygon dissection of $P$,
but to a higher angulation.  However, this turns out to be an
unnecessary restriction, both for the combinatorial definition in
\cite{BHJ} and for $\rho_R$.

This paper only considers the above version of the Caldero-Chapoton
map with values in $\BZ$.  In the sequel \cite{HJ} we consider a more
elaborate version,
\[
  \rho_R( c ) = \alpha( c ) \sum_e \chi\big( \Gr_e( Gc ) \big)\beta( e ),
\]
where $\alpha$ and $\beta$ have values in a Laurent polynomial ring.
In particular, we will obtain a version of the generalised friezes of
\cite{BHJ} with values in Laurent polynomials.

The paper is organised as follows: Section \ref{sec:modR} gives some
background from representation theory and Section \ref{sec:Gr} shows a
few properties of Grassmannians.  Section \ref{sec:friezes} proves
Theorem A, Section \ref{sec:extension} proves another useful property
of $\rho_R$, and Section \ref{sec:BHJ} proves Theorem B.

Note that Sections \ref{sec:modR} and \ref{sec:Gr} sum up and adapt
some well-known material to our setting.  In these sections we make no
claim to originality.  However, it did not seem feasible to replace
them with references.

\section{Modules over $\sR$}
\label{sec:modR}

This section sums up some items from representation theory.  Most of
them go back to \cite{AusRepDim}, \cite{AusRepI}, \cite{AusRepII}, and
\cite{AR}.

\begin{Setup}
\label{set:blanket}
Throughout, $\BC$ is the field of complex numbers and $\sC$ is an
essentially small $\BC$-linear $\Hom$-finite triangulated category with
split idempotents and Serre functor $S$.  The suspension functor of
$\sC$ is denoted $\Sigma$.

Moreover, $\sR$ is a functorially finite subcategory of $\sC$, closed
under direct sums and summands, which is rigid, that is, $\sC( \sR ,
\Sigma \sR ) = 0$.  Here $\sC( -,- )$ is short for $\Hom_{ \sC }( -,-
)$.
\end{Setup}

\begin{bfhpg}
[The case $\sR = \add\, R$]
\label{bfhpg:rigid}
An important special case is $\sR = \add R$ where $R \in \sC$ is
rigid, that is, $\sC( R,\Sigma R ) = 0$.  Then $\sR$ is automatically
functorially finite, and we have the endomorphism algebra $E = \sC(
R,R )$, the category of $E$-right-modules $\Mod\, E$, and the functor
$G$ from Equation \eqref{equ:ampersand}.  This is the situation from
the introduction.

However, $\sR$ only has the form $\add R$ when it has finitely many
indecomposable objects, and we want to permit infinitely many because
there are nice examples where it is relevant, see e.g.\ \cite[sec.\
6]{JP}.  This requires the following, more general machinery.
\end{bfhpg}

\begin{bfhpg}
[Krull-Schmidt categories]
\label{bfhpg:Krull-Schmidt}
Since $\sC$ is $\BC$-linear $\Hom$-finite with split idempotents, it is
Krull-Schmidt.  So is $\sR$, since it is closed under direct sums
and summands.
We denote the sets of indecomposable objects by $\ind\, \sC$ and
$\ind\, \sR$.
Note that $\sR$ being rigid implies that $\Sigma^{ -1 }( \ind\, \sR )$
and $\ind\, \sR$ are disjoint.
\end{bfhpg}

\begin{bfhpg}
[The category $\Mod\, \sR$]
\label{bfhpg:ModR}
We let $\Mod\, \sR = ( \sR^{ \opp },\Mod\, \BC )$ denote the category of
$\BC$-linear contravariant functors $\sR \rightarrow \Mod\, \BC$.  It is
an abelian category where a sequence $K \rightarrow L \rightarrow M$
is exact if and only if its evaluation at each object of $\sR$ is
exact, see \cite[sec.\ 2]{AusRepI}.

There is a functor
\[
  \begin{array}{ccc}
    \sC & \stackrel{G}{\longrightarrow} & \Mod\, \sR, \\[2mm]
    c   & \longmapsto                   & \sC( -,\Sigma c )|_{ \sR }.
  \end{array}
\]
Note that $G( \sR ) = 0$. 

If $\sR = \add\, R$ where $R$ is a rigid object, and $E = \sC( R,R )$
is the endomorphism algebra, then there is an equivalence
\[
  \begin{array}{ccc}
    \Mod\, \sR & \stackrel{\sim}{\longrightarrow} & \Mod\, E, \\[2mm]
    M          & \longmapsto                      & M( R )
  \end{array}
\]
which identifies the two versions of $G$ given in this
paragraph and Equation \eqref{equ:ampersand}.

Note that $\Mod\, \sR = ( \sR^{ \opp },\Mod\, \BC )$ has the subcategory
$( \sR^{ \opp },\mod\, \BC )$ of $\BC$-linear contravariant functors $\sR
\rightarrow \mod\, \BC$.  It is closed under subobjects and quotients,
so is an abelian subcategory of $\Mod\, \sR$ with exact inclusion
functor.
\end{bfhpg}

\begin{bfhpg}
[Projective objects]
\label{bfhpg:projectives}
An object $r \in \sR$ gives a projective object
\[
  P_r( - ) = \sR( -,r ) = G( \Sigma^{ -1 }r )
\]
in $\Mod\, \sR$.  For an object $M \in \Mod\, \sR$, Yoneda's Lemma
says that there is an isomorphism
\begin{equation}
\label{equ:Yoneda}
  \Hom_{ \Mod\, \sR }( P_r,M ) \rightarrow M( r )
\end{equation}
given by mapping a natural transformation $P_r = \sR( -,r )
\rightarrow M$ to its evaluation on $\id_r$.
%Consequently $P_r$ is projective in $\Mod\, \sR$.  
%The category $\Mod\, \sR$ has enough projectives.
%and each projective is a direct
%summand in a direct sum of objects of the form $P_r$.

If $r \in \ind\, \sR$ then $P_r$ is indecomposable and has a unique
maximal proper subobject, $\rad\, P_r$.  Hence a morphism $M
\rightarrow P_r$ which is not an epimorphism factors through $\rad\,
P_r \hookrightarrow P_r$.  See \cite[sec.\ 2]{AusRepI} and \cite[props.\
2.2 and 2.3]{AusRepII}.
\end{bfhpg}

\begin{bfhpg}
[The category $\mod\, \sR$]
\label{bfhpg:coherents}
An object $M \in \Mod\, \sR$ is called coherent if there is an exact
sequence
\[
  P_{ r_1 } \rightarrow P_{ r_0 } \rightarrow M \rightarrow 0
\]
with $r_0, r_1 \in \sR$.  The full subcategory of
coherent objects is denoted by $\mod\, \sR$.  It is clearly contained
in $( \sR^{ \opp },\mod\, \BC )$.  Since $\sR$ is functorially finite in
$\sC$, the category $\mod\, \sR$ is abelian by \cite[rmk.\ after def.\
2.9]{IY} and the inclusion $\mod\, \sR \hookrightarrow \Mod\, \sR$ is
exact by \cite[sec.\ III.2]{AusRepDim}.
\end{bfhpg}

\begin{bfhpg}
[Dualising variety]
\label{bfhpg:dualising_variety}
Composition with the functor $\dual( - ) = \Hom_{ \BC }( -,\BC )$
gives a duality
\[
  ( \sR^{ \opp },\mod\, \BC ) \rightarrow ( \sR,\mod\, \BC ).
\]
By \cite[props.\ 2.10 and 2.11]{IY} the category $\sR$ is a dualising
variety in the sense of \cite[sec.\ 2]{AR}, so the displayed duality
restricts to a duality
\[
  \mod\, \sR \rightarrow \mod\, \sR^{ \opp }.
\]
\end{bfhpg}

\begin{bfhpg}
[Simple and finite length objects]
\label{bfhpg:fl}
The simple objects of $\Mod\, \sR$ are precisely those of the form
\[
  S_r = P_r / \rad\, P_r
\]
for $r \in \ind\, \sR$,
%Moreover, $S_r( r )$ is a simple $\sR( r,r )$-right-module, and 
%$S_r( r' ) = 0$ for $r' \in \ind\, \sR$ with $r' \not\cong r$.
see \cite[props.\ 2.2 and 2.3]{AusRepII}.  Since $\sR$ is a dualising
variety, $S_r \in \mod\, \sR$ for each $r \in \ind\, \sR$ by
\cite[prop.\ 3.2(c)]{AR}.  As in \cite[(1.4)]{JP} it follows that
$\mod\, \sR$ and $\Mod\, \sR$ have the same simple and the same finite
length objects.  We denote the full subcategory of finite length
objects by $\fl\, \sR$.  It is closed under subobjects and quotients
in $\mod\, \sR$ and in $\Mod\, \sR$, so is abelian and the inclusion
functors $\fl\, \sR \hookrightarrow \mod\, \sR$ and $\fl\, \sR
\hookrightarrow \Mod\, \sR$ are exact.
%
%The argument from \cite[(1.4)]{JP} for $\Mod\, \sR$ and $\mod\, \sR$ having the same simple and the same finite length objects can be taken over directly.
%
%The fact that $\fl\, \sR$ is closed under subobjects and quotients in $\Mod\, \sR$ is simply because a subobject or a quotient with infinite length would force infinite length of the big object.
%
%The same argument works for $\fl\, \sR$ in $\mod\, \sR$.
\end{bfhpg}

\begin{bfhpg}
[$\K$-theory]
\label{bfhpg:K}
It is immediate from paragraph \ref{bfhpg:fl} that $\K_0( \fl\, \sR )$
is a free group on the generators $[ S_r ]$ for $r \in \ind\, \sR$,
where $[-]$ denotes the $\K_0$-class of an object.  If $M \in \fl\,
\sR$ then $M$ has a finite filtration with simple quotients and the
$\K_0$-class $[ M ]$ is the sum of the $\K_0$-classes of the simple
quotients.  For $M' \subseteq M$ this implies that
\begin{equation}
\label{equ:K}
  [ M' ] = [ M ] \Leftrightarrow M' = M
\;\;,\;\;
  [ M' ] = 0     \Leftrightarrow M' = 0.
\end{equation}
\end{bfhpg}

\begin{bfhpg}
[Injective objects]
\label{bfhpg:injectives}
The previous items are left/right symmetric so if $r \in \ind\, \sR$
then $\overline{P}_r = \sR( r,- )$ is indecomposable projective in
$\Mod\, \sR^{ \opp }$ and there is a short exact sequence
\[
  0
  \rightarrow \rad\, \overline{P}_r
  \rightarrow \overline{P}_r
  \rightarrow \overline{S}_r
  \rightarrow 0
\]
in $\Mod\, \sR^{ \opp }$ where $\overline{S}_r$ is simple in $\Mod\,
\sR^{ \opp }$.
%And also in $\mod\, \sR^{ \opp }$.
The sequence is in $( \sR , \mod\, \BC )$ and dualising it gives
a short exact sequence
\[
  0
  \rightarrow S_r
  \rightarrow I_r
  \rightarrow \corad I_r
  \rightarrow 0
\] 
where
\[
  I_r = \dual\!\sR( r,- ) = \sR( -,Sr )
\]
is indecomposable injective in $\Mod\, \sR$.  A morphism $I_r
\twoheadrightarrow N$ which is not a monomorphism factors through $I_r
\twoheadrightarrow \corad\, I_r$.
%It is indecomposable because if it decomposed, then so would $\dual I_r = \overline{P}_r.
%
%The dual of $\overline{S}_r$ is $S_r$:  It is clear that $\dual\overline{S}_r$ is only 
%supported at $r$ (because $\overline{S}_r$ is).  Hence it is enough to see that it is 
%simple.  But if it had a non-trivial subobject $L \rightarrow \dual\overline{S}_r$, then 
%both $L$ and $\dual\overline{S}_r$ would be in $( \sR,\mod\, \BC )$, so dualising the 
%inclusion would give a non-trivial surjection $\overline{S}_r \rightarrow \dual L$; a 
%contradiction.
%
%If $I_r \stackrel{ \kappa }{ \rightarrow } N$ is an non-monomorphic epimorphism then since 
%$I_r$ is in $( \sR^{ \opp },\mod\, \BC )$, so is $N$.
%Dualising gives a non-epimorphic monomorphism $\dual N 
%\rightarrow \dual I_r = \overline{P}_r$ which must factorise through $\rad \overline{P}_r 
%\rightarrow \overline{P}_r$.  Hence $\kappa$ must factorise through $I_r \rightarrow \corad 
%I_r$.
%
%If $0 \neq I \subseteq I_r$ is a subobject then setting $N = I_r/I$ shows $S_r 
%\subseteq I$ so $S_r$ is the socle of $I_r$.
%
%Note that $\overline{P}_r \in \mod\, \sR^{ \opp }$ implies $I_r \in \mod\, \sR$.  
%
%Since $S_r$ is also in $\mod\, \sR$ it follows that $\corad I_r \in \mod\, \sR$.
\end{bfhpg}

The next two lemmas follow by standard methods.  We include short
proofs for completeness.  Note that if $\sA$ and $\sB$ are full
subcategories of $\sC$ then $\sA * \sB$ denotes the full subcategory
of objects $x$ appearing in distinguished triangles $a \rightarrow x
\rightarrow b$ with $a \in \sA$, $b \in \sB$.

\begin{Lemma}
\label{lem:5}
\begin{enumerate}

  \item For $M \in \mod\, \sR$ there is $z \in ( \Sigma^{ -1 } \sR ) *
  \sR$ such that $Gz \cong M$.

\medskip

  \item For $z \in ( \Sigma^{ -1 } \sR ) * \sR$ and $c \in \sC$, the
    map 
\[
  \sC( z,c )
  \stackrel{ G( - ) }{ \longrightarrow }
  \Hom_{ \Mod\, \sR }( Gz,Gc )
\]
is surjective.
\end{enumerate}
\end{Lemma}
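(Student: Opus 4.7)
My plan is to attack (i) by lifting a projective presentation of $M$ in $\mod\, \sR$ back to $\sC$ via Yoneda and then completing to a triangle, and (ii) by using Yoneda on the projective cover of $Gz$ combined with the long exact sequence obtained from varying the first argument of $\sC( -,- )$.

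For (i), I would begin with a projective presentation $P_{ r_1 } \xrightarrow{ f_* } P_{ r_0 } \to M \to 0$ with $r_0, r_1 \in \sR$, which exists since $M$ is coherent. Yoneda \eqref{equ:Yoneda} identifies $f_*$ with a morphism $f : r_1 \to r_0$ in $\sR$, hence with a morphism $\Sigma^{ -1 }f : \Sigma^{ -1 } r_1 \to \Sigma^{ -1 } r_0$ in $\sC$. Complete this to a distinguished triangle $\Sigma^{ -1 } r_1 \to \Sigma^{ -1 } r_0 \to z \to r_1$; rotating it to $\Sigma^{ -1 } r_0 \to z \to r_1 \to r_0$ exhibits $z \in ( \Sigma^{ -1 } \sR ) * \sR$. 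To identify $Gz$ with $M$, apply $\sC( \rho,- )$ for $\rho \in \sR$; the piece
\[
  \sC( \rho,r_1 ) \to \sC( \rho,r_0 ) \to \sC( \rho,\Sigma z ) \to \sC( \rho,\Sigma r_1 )
\]
of the resulting long exact sequence has vanishing rightmost term by rigidity of $\sR$, so $Gz( \rho ) = \sC( \rho,\Sigma z )$ is naturally the cokernel of $P_{ r_1 }( \rho ) \to P_{ r_0 }( \rho )$, which is $M( \rho )$.

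For (ii), fix a triangle $\Sigma^{ -1 }r \to z \to r' \xrightarrow{ \gamma } r$ witnessing $z \in ( \Sigma^{ -1 } \sR ) * \sR$, and rotate it to $z \to r' \xrightarrow{ \gamma } r \xrightarrow{ \delta } \Sigma z$. Exactly the argument from (i) yields a projective presentation $P_{ r' } \xrightarrow{ \gamma_* } P_r \xrightarrow{ \pi } Gz \to 0$, where $\pi$ corresponds under Yoneda to $\delta$. Given $\phi : Gz \to Gc$, Yoneda identifies $\phi \circ \pi$ with some $\psi \in Gc( r ) = \sC( r,\Sigma c )$, and exactness forces $\psi \circ \gamma = 0$.

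The key step is now to lift $\psi$ across the triangle to obtain $g : z \to c$. Applying $\sC( -,\Sigma c )$ to the rotated triangle $r' \xrightarrow{ \gamma } r \xrightarrow{ \delta } \Sigma z \to \Sigma r'$ gives an exact sequence
\[
  \sC( \Sigma z,\Sigma c ) \to \sC( r,\Sigma c ) \xrightarrow{ \gamma^* } \sC( r',\Sigma c ),
\]
so $\psi \circ \gamma = 0$ forces $\psi = h \circ \delta$ for some $h : \Sigma z \to \Sigma c$. Setting $g = \Sigma^{ -1 }h$, a Yoneda unwinding shows that $G( g ) \circ \pi$ and $\phi \circ \pi$ agree on $\id_r \in P_r( r )$, hence $G( g ) = \phi$ by surjectivity of $\pi$. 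I expect the only real obstacle to be bookkeeping — tracking the shifts through two rotations and keeping straight which triangle yields which long exact sequence — rather than any conceptual difficulty; once the correct sequences are in place, rigidity of $\sR$ does the rest.
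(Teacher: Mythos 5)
Your proposal is correct and takes essentially the same route as the paper: part (i) is identical (your cokernel computation via rigidity is exactly the verification the paper leaves as ``easy to check''), and part (ii) — projective presentation of $Gz$ from the triangle, Yoneda, and an explicit lift of $\psi$ through the long exact sequence $\sC( \Sigma z,\Sigma c ) \rightarrow \sC( r,\Sigma c ) \rightarrow \sC( r',\Sigma c )$ — is just the element-level unwinding of the paper's Four Lemma diagram chase on the same two exact rows.
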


\begin{proof}
(i) For $M \in \mod\, \sR$ there is an exact sequence $P_{ r_1 }
\rightarrow P_{ r_0 } \rightarrow M \rightarrow 0$ with $r_0, r_1 \in
\sR$.  By Equation \eqref{equ:Yoneda} the first arrow is induced by a
morphism $r_1 \rightarrow r_0$ in $\sR$.  Desuspending and completing
to a distinguished triangle $\Sigma^{ -1 }r_1 \rightarrow \Sigma^{ -1
}r_0 \rightarrow z \rightarrow r_1$ in $\sC$, it is easy to check $M
\cong Gz$.

(ii) For $r \in \sR$, Equation \eqref{equ:Yoneda} gives an isomorphism
$\Hom_{ \Mod\, \sR }( P_r,Gc ) \rightarrow ( Gc )( r )$ which can also
be written $\Hom_{ \Mod\, \sR } \big( G( \Sigma^{ -1 }r ),Gc \big)
\rightarrow \sC( \Sigma^{ -1 }r,c )$.  One checks that its inverse is
$G( - )$ which is hence bijective in this case.

Now let $z \in ( \Sigma^{ -1 }\sR ) * \sR$ be given.  There
is a distinguished triangle $\Sigma^{ -1 }r_1 \rightarrow \Sigma^{ -1
}r_0 \rightarrow z \rightarrow r_1$ which induces an exact sequence
$G( \Sigma^{ -1 }r_1 ) \rightarrow G( \Sigma^{ -1 }r_0 ) \rightarrow
Gz \rightarrow 0$ and a commutative diagram
\[
%@-3.5pc @!
%  \def\objectstyle{\scriptstyle}
  \xymatrix @-0.25pc @C=2ex {
    \sC( r_1,c ) \ar[r] \ar[d] & \sC( z,c ) \ar[r] \ar^{ G( - ) }[d] & \sC( \Sigma^{ -1 }r_0,c ) \ar[r] \ar^{ G( - ) }[d] & \sC( \Sigma^{ -1 }r_1,c ) \ar^{ G( - ) }[d] \\
    0 \ar[r] & \Hom_{ \Mod\, \sR } \big( Gz,Gc \big) \ar[r] & \Hom_{ \Mod\, \sR } \big( G ( \Sigma^{ -1 }r_0 ),Gc \big) \ar[r] & \Hom_{ \Mod\, \sR } \big( G ( \Sigma^{ -1 }r_1 ),Gc \big)
                     }
\]
with exact rows.  The first vertical arrow is surjective, and the
third and fourth vertical arrows are bijective by the previous part of
the proof.  The Four Lemma implies that the second vertical arrow is
surjective as claimed.
\end{proof}

Now let
\[
  \Delta = \tau c \rightarrow b \stackrel{ \beta }{ \rightarrow } c
\]
be an AR triangle in $\sC$ whence
\[
  G( \Delta ) = G( \tau c ) \rightarrow Gb \rightarrow Gc
\]
is an exact sequence.

\begin{Lemma}
\label{lem:6a}
\begin{enumerate}

  \item  If $c = \Sigma^{-1}r \in \Sigma^{-1} \ind\, \sR$
         then $G( \Delta ) = 0 \rightarrow \rad\, P_r \rightarrow P_r$.

\medskip

  \item  If $c = r \in \ind\, \sR$
         then $G( \Delta ) = I_r \rightarrow \corad\, I_r \rightarrow 0$.

\medskip

  \item  If $c \not\in \Sigma^{ -1 }( \ind\, \sR ) \cup \ind\, \sR$ 
         then $G( \Delta )$ is a short exact sequence.

\end{enumerate}
\end{Lemma}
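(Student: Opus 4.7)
The plan is to apply the cohomological functor $\sC(r',-)$ for each $r' \in \ind\,\sR$ to the AR triangle $\Delta = \tau c \to b \stackrel{\beta}{\to} c \stackrel{\gamma}{\to} \Sigma\tau c$ and to read off from the resulting long exact sequence the piece
\[
\sC(r',c) \stackrel{\gamma_*}{\longrightarrow} \sC(r',\Sigma\tau c) \longrightarrow \sC(r',\Sigma b) \longrightarrow \sC(r',\Sigma c) \stackrel{(\Sigma\gamma)_*}{\longrightarrow} \sC(r',\Sigma^{2}\tau c),
\]
whose middle three terms compute $G(\tau c)(r') \to G(b)(r') \to G(c)(r')$. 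The core input will be the almost-split property of $\gamma$: a morphism $f : r' \to c$ satisfies $\gamma \circ f = 0$ if and only if $f$ is not a retraction. Since $\tau = \Sigma^{-1}S$ commutes with $\Sigma$, the shifted triangle $\Sigma\Delta$ is again an AR triangle, so the analogous dichotomy will govern $(\Sigma\gamma)_*$ on morphisms $r' \to \Sigma c$.

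For part (iii), I will argue that both $c$ and $\Sigma c$ lie outside $\ind\,\sR$, so no morphism from an object of $\sR$ can be a retraction onto either: otherwise the indecomposable $c$ or $\Sigma c$ would appear as a direct summand of an object of $\sR$, contradicting the hypothesis and the fact that $\sR$ is closed under summands. Thus both $\gamma_*$ and $(\Sigma\gamma)_*$ vanish and the middle three terms form a short exact sequence.

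For part (i) with $c = \Sigma^{-1}r$ and $r \in \ind\,\sR$, direct inspection gives $G(c) = \sC(-,r)|_{\sR} = P_r$, and Serre duality together with the rigidity of $\sR$ yields $G(\tau c)(r') \cong \dual\sC(r,\Sigma r') = 0$. The exact sequence collapses to $0 \to G(b)(r') \to P_r(r') \stackrel{(\Sigma\gamma)_*}{\longrightarrow} I_r(r')$, and applying the almost-split property to $\Sigma\Delta$ (an AR triangle ending at the indecomposable $r$) shows that the kernel of $(\Sigma\gamma)_*$ at $r'$ consists of the non-retractions $r' \to r$, i.e.\ exactly $(\rad P_r)(r')$. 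Hence $G(b) \cong \rad P_r$, embedded in $P_r$ as claimed.

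Part (ii) with $c = r \in \ind\,\sR$ is handled dually: rigidity gives $G(c) = 0$, while $G(\tau c) = \sC(-,Sr)|_{\sR} = I_r$, and the exact sequence becomes $P_r \stackrel{\gamma_*}{\longrightarrow} I_r \to G(b) \to 0$. The principal obstacle I foresee is identifying the image of $\gamma_*$ with the unique simple subobject $S_r \subseteq I_r$. The plan is: via the Serre pairing $\sC(r',Sr) \cong \dual\sC(r,r')$, the map $\gamma_*$ is encoded by the linear form $\lambda_\gamma : \End(r) \to \BC$ corresponding to $\gamma$, and the almost-split property forces $\lambda_\gamma$ to be non-zero and to vanish on $\rad\End(r)$. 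Tracking the resulting functional $g \mapsto \lambda_\gamma(fg)$ on $\sC(r,r')$ as $f$ ranges over $\sC(r',r)$ will show that this image is zero when $r' \not\cong r$ and is one-dimensional at $r' = r$, hence coincides with the (unique) simple submodule $S_r$ of $I_r$. Consequently $G(b) = I_r/S_r = \corad I_r$, closing the argument.
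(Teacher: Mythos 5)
Your proof is correct, and while it shares the paper's overall strategy (apply a Hom functor to the AR triangle, then exploit the almost-split property, Serre duality, and rigidity), the execution of parts (i) and (ii) is genuinely different. Your part (iii) is essentially the paper's argument, phrased via the vanishing of $\gamma_*$ and $(\Sigma\gamma)_*$ rather than the surjectivity of $\beta_*$ and $(\Sigma\beta)_*$ — equivalent by exactness of the long exact sequence. For (i), the paper identifies $\Image G\beta = \rad P_r$ by choosing a right $\sR$-approximation $r'\to\Sigma b$, noting that its composite with $\Sigma\beta$ is an almost splitable morphism, and citing Auslander's exactness result; you instead read off $\Ker(\Sigma\gamma)_*$ at each $r'$ as the non-split-epimorphisms $r'\to r$, using that $\Sigma\Delta$ is the AR triangle ending at $r$. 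This is more self-contained, though it implicitly relies on identifying the unique maximal subobject $\rad P_r$ with the radical subfunctor — the same Auslander input in a different guise. For (ii), the paper simply dualises (i) through $\sC^{\opp}$ and $\sR^{\opp}$, whereas you compute directly that $\Image\gamma_*$ is the simple socle $S_r\subseteq I_r$, using the Serre-duality description of the connecting morphism as a linear form on $\End(r)$ that is non-zero but kills $\rad\End(r)$, and the fact that any composite $r\to r'\to r$ through an indecomposable $r'\not\cong r$ lies in the radical. This costs some bookkeeping with the naturality of the Serre pairing (and with the uniqueness of the simple subobject of $I_r$, which you correctly invoke), but it avoids the op-category detour and makes the appearance of $\corad I_r$ concrete. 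Both routes are valid; yours trades two external citations for a slightly longer direct computation.
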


\begin{proof}
(i)  Let $c = \Sigma^{ -1 }r$ whence $Gc = P_r$.

Pick a right $\sR$-approximation $r' \stackrel{ \rho' }{ \rightarrow }
\Sigma b$.  It is easy to see that composing with $\Sigma b \stackrel{
  \Sigma \beta }{ \rightarrow } r$ gives a morphism $r' \rightarrow r$
which is almost splitable in the sense of \cite[sec.\ 2]{AusRepII}, so
the row in the following diagram is exact by \cite[cor.\
2.6]{AusRepII}.
\[
%@-3.5pc @!
  \xymatrix @-0.25pc {
    \sC( -,r' )|_{ \sR } \ar[rr] \ar@{->>}_{ \rho'_{ * }}[dr] & & \sC( -,r )|_{ \sR } \ar^-{ \sigma }[rr] & & S_r \ar[rr] & & 0 \\
    & \sC( -,\Sigma b )|_{ \sR } \ar_{ ( \Sigma \beta )_{ * } }[ur]
                     }
\]
Since $\sigma$ is the canonical epimorphism $P_r \rightarrow S_r$, the
diagram shows $\Image\, ( \Sigma \beta )_{ \ast } = \rad\, P_r$.  This
can also be written $\Image G\beta = \rad\, P_r$.

Finally, $c = \Sigma^{ -1 }r$ implies
\[
%\scriptstyle{
  G( \tau c )
  = \sC( -,\Sigma \tau c )|_{ \sR }
  = \sC \big( -,\Sigma ( S \Sigma^{ -1 } )( \Sigma^{ -1 }r ) \big) \big|_{ \sR }
  = \sC( -,S\Sigma^{ -1 }r )|_{ \sR }
  = \dual\!\sC( \Sigma^{ -1 }r,- )|_{ \sR } 
  = 0.
%            }
\]

The sequence $G( \Delta )$ is exact, and combining with what we have
shown gives $G( \Delta ) = 0 \rightarrow \rad\, P_r \rightarrow P_r$
as desired. 

(ii)  Apply part (i) to $\sC^{ \opp }$ and $\sR^{ \opp }$ and
dualise.

(iii)  There is a long exact sequence
\[
  \xymatrix {
  G( \Sigma^{ -1 }b ) \ar^-{ G( \Sigma^{ -1 }\beta ) }[rr]
  & & G( \Sigma^{ -1 }c ) \ar[r] 
  & G( \tau c ) \ar[r]
  & Gb \ar^{ G \beta }[r]
  & Gc.
                     }
\]
The first morphism can also be written $\sC( -,b )|_{ \sR } \stackrel{
\beta_{ * } }{ \rightarrow } \sC( -,c )|_{ \sR }$.  It is an
epimorphism when $c \not\in \ind\,\sR$, since $\beta$ is right almost
split. Similarly, the last morphism in the long exact sequence is an
epimorphism when $\Sigma c \not\in \ind\, \sR$, and part (iii) of the
proposition follows.
\end{proof}

\section{Grassmannians}
\label{sec:Gr}

This section adapts some material from \cite{CC}, \cite{CK},
\cite{CK2}, \cite{Palu}, and \cite{Palu2} to our setting.

\begin{Definition}
[Grassmannians] Let $M \in \Mod\, \sR$ and $e \in \K_0( \fl\, \sR )$
be given.  Let $\Gr( M )$ be the Grassmannian of subobjects $M'
\subseteq M$ with finite length, and let $\Gr_e( M ) \subseteq \Gr( M
)$ be the Grassmannian of subobjects $M' \subseteq M$ with finite
length and $[ M' ] = e$.
\end{Definition}

\begin{bfhpg}
[Constructible maps]
\label{bfhpg:constructible_maps}
A morphism $M \stackrel{ j }{ \rightarrow } N$ in $\fl\, \sR$ induces
constructible maps of Grassmannians as follows.
\[
  \begin{array}{ccccccc}
    \Gr( M ) & \rightarrow & \Gr( N ) & , & \Gr( N ) & \rightarrow & \Gr( M )\\[3mm]
    M'       & \mapsto     & jM'      & , & N'       & \mapsto     & j^{ -1 }N'
  \end{array}
\]
See \cite[sec.\ 2.1]{Palu2} for the definitions of constructible sets
and maps.  Note that in particular, the image and the inverse image
under a constructible map of a constructible set are constructible.
\end{bfhpg}

\begin{Setup}
\label{set:Gr}
For the rest of this section $a \rightarrow b \rightarrow c$ are fixed
morphisms in $\sC$.  We assume that applying $G$ gives a short exact
sequence 
\begin{equation}
\label{equ:astast}
  0
  \rightarrow Ga
  \stackrel{i}{\rightarrow} Gb
  \stackrel{p}{\rightarrow} Gc
  \rightarrow 0
\end{equation}
and that $Ga$, $Gb$, $Gc$ have finite length in $\Mod\, \sR$.
\end{Setup}

\begin{Definition}
For $e, f \in K_0( \fl\, \sR )$, there is a constructible subset
\[
  X_{ e,f } = \big\{\, L \in \Gr( Gb )
                      \,\big|\, [ i^{ -1 }L ] = e,
                                [ pL ] = f \,\big\}
            \subseteq \Gr( Gb )
\]
and a morphism
\[
  \begin{array}{ccccc}
    X_{ e,f } & \stackrel{\pi_{ e,f }}\longrightarrow & \Gr_e( Ga ) & \times & \Gr_f( Gc ) \lefteqn{\;\; ,}\\[3mm]
    L & \longmapsto & ( \;\; i^{ -1 }L & , & pL \;\; ) \lefteqn{\;\; .}
  \end{array}
\]
\end{Definition}

\begin{Lemma}
\label{lem:union}
For each $g \in \K_0( \fl\, \sR )$ we have
\[
  \Gr_g( Gb ) = \bigcup_{ e+f = g } X_{ e,f }
\]
where the right hand side is a finite disjoint union.
\end{Lemma}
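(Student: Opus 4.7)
The plan is to reduce the statement to the additivity of $\K_0$-classes in short exact sequences, applied to suitable restrictions of the sequence \eqref{equ:astast}.

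First I would show the set-theoretic equality. Given any $L \in \Gr(Gb)$, restrict the short exact sequence $0 \to Ga \stackrel{i}{\to} Gb \stackrel{p}{\to} Gc \to 0$ to $L$: the kernel of $p|_L : L \to Gc$ is $L \cap \Image i = i(i^{-1}L)$, and since $i$ is injective we obtain a short exact sequence
\[
  0 \rightarrow i^{-1}L \rightarrow L \rightarrow pL \rightarrow 0
\]
in $\Mod\,\sR$. Both $i^{-1}L$ and $pL$ are subquotients of the finite-length object $L$, hence lie in $\fl\,\sR$, and additivity of $[-]$ on short exact sequences in $\fl\,\sR$ gives $[L] = [i^{-1}L] + [pL]$. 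Therefore, setting $e = [i^{-1}L]$ and $f = [pL]$, we have $L \in X_{e,f}$ with $e+f = [L]$. Conversely, if $L \in X_{e,f}$ then by definition $[L] = [i^{-1}L] + [pL] = e + f$. This yields $\Gr_g(Gb) = \bigcup_{e+f=g} X_{e,f}$.

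Next I would check disjointness. If $L \in X_{e,f} \cap X_{e',f'}$, then $e = [i^{-1}L] = e'$ and $f = [pL] = f'$ directly from the definition of $X_{e,f}$, so the sets indexed by distinct pairs $(e,f)$ are pairwise disjoint.

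Finally, for finiteness, observe that $\K_0(\fl\,\sR)$ is freely generated by the classes of the simples $S_r$ (paragraph \ref{bfhpg:K}), and any subobject $M' \subseteq Ga$ in $\fl\,\sR$ has $[M']$ bounded coefficient-wise by $[Ga]$ in this basis. Since $Ga$ has finite length, only finitely many classes $e$ arise as $[i^{-1}L]$; likewise only finitely many $f$ arise as $[pL]$. Hence only finitely many pairs $(e,f)$ with $e+f=g$ contribute a non-empty $X_{e,f}$, so the union is finite. The only mildly delicate point in the argument is the verification that restricting the short exact sequence to $L$ stays exact, which follows from the injectivity of $i$ and the identification $\ker(p|_L) = i(i^{-1}L)$; everything else is formal.
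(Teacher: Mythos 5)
Your proposal is correct and follows essentially the same route as the paper: the key point in both is the short exact sequence $0 \rightarrow i^{-1}L \rightarrow L \rightarrow pL \rightarrow 0$ and the resulting identity $[L] = [i^{-1}L] + [pL]$ in $\K_0( \fl\, \sR )$. The paper simply states the exactness and the finiteness without elaboration, whereas you spell out the verification of $\ker( p|_L ) = i( i^{-1}L )$ and the coefficient-wise bound on $\K_0$-classes of subobjects; these are exactly the details the paper leaves implicit.
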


\begin{proof}
Each $L \in \Gr( Gb )$ is a subobject of $Gb$ so sits in a short exact
sequence $0 \rightarrow i^{ -1 }L \rightarrow L \rightarrow pL
\rightarrow 0$ whence $[L] = [i^{ -1 }L] + [pL]$ in $\K_0( \fl\, \sR
)$.  This gives the disjoint union in the lemma which is clearly
finite.
\end{proof}

\begin{Lemma}
\label{lem:pi}
\begin{enumerate}

  \item  If the sequence \eqref{equ:astast} is split exact then $\pi_{
      e,f }$ is surjective. 

\medskip

  \item If $( e,f ) \neq ( 0,[Gc] )$ and $a \rightarrow b \rightarrow c$
  is an AR triangle then $\pi_{ e,f }$ is surjective.

\medskip

  \item If $( e,f ) = ( 0,[Gc] )$ then either $\pi_{ e,f }$ is
  surjective or $X_{ e,f } = \emptyset$.  The former happens if and
  only if the sequence \eqref{equ:astast} is split exact.

\medskip

  \item If $( e,f ) = ( 0,[Gc] )$ then $\Gr_e( Ga ) \times \Gr_f( Gc )
    = \{\, ( 0,Gc ) \,\}$ has only one point.

\medskip

  \item  Each fibre of $\pi_{ e,f }$ is an affine space over $\BC$. 

\end{enumerate}
\end{Lemma}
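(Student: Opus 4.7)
The plan is to dispatch parts (i), (iii), (iv), (v) first by direct manipulation of subobjects, and then concentrate on part (ii), which I expect to be the main obstacle. Part (iv) is immediate from \eqref{equ:K}, which forces $\Gr_0( Ga ) = \{ 0 \}$ and $\Gr_{ [ Gc ] }( Gc ) = \{ Gc \}$. For (i), I take a section $s \colon Gc \to Gb$ of $p$ and set $L = i( M ) + s( N )$; since $s( N ) \cap i( Ga ) = 0$, this gives $i^{ -1 }L = M$ and $pL = N$. Part (iii) then follows from (iv) together with the observation that an element of $X_{ 0,[ Gc ] }$ is a subobject $L \subseteq Gb$ with $L \cap i( Ga ) = 0$ and $pL = Gc$, i.e.\ a subobject mapping isomorphically onto $Gc$ via $p$, which is precisely a splitting of \eqref{equ:astast}.

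For (v), fix $( M, N )$ and set $B = p^{ -1 }( N )$. A point of the fibre $\pi_{ e,f }^{ -1 }( M,N )$ is an $L \subseteq B$ with $L \cap i( Ga ) = i( M )$ and $L + i( Ga ) = B$. Quotienting by $i( M )$, such $L$'s correspond bijectively to direct complements of $i( Ga )/i( M ) \cong Ga/M$ in $B/i( M )$; the set of such complements, when nonempty, is a torsor over $\Hom_{ \Mod\, \sR }( N, Ga/M )$, a finite-dimensional $\BC$-vector space, giving the affine $\BC$-space structure.

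The heart is (ii). For $( M, N ) \in \Gr_e( Ga ) \times \Gr_f( Gc )$ with $( e, f ) \neq ( 0, [ Gc ] )$, it suffices to split the sequence $0 \to Ga/M \to B/i( M ) \to N \to 0$, since then (i) applied to this smaller sequence produces an $L$ with $\pi_{ e,f }( L ) = ( M,N )$. In \emph{Case A} ($N \neq Gc$), I split the pullback $0 \to Ga \to B \to N \to 0$: Lemma~\ref{lem:5}(i) gives $z \in ( \Sigma^{ -1 } \sR ) * \sR$ with $Gz \cong N$, and Lemma~\ref{lem:5}(ii) lifts $N \hookrightarrow Gc$ to some $\zeta \colon z \to c$ in $\sC$. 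Since $N \neq Gc$, $\zeta$ cannot be a split epimorphism, so right-almost-splitness of $\beta \colon b \to c$ forces $\zeta = \beta \zeta'$; applying $G$ gives a section of $p$ over $N$, splitting the pullback, after which any pushout remains split.

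In \emph{Case B} ($N = Gc$, $M \neq 0$) the pullback is the whole of \eqref{equ:astast} and I need the pushout along $\pi \colon Ga \twoheadrightarrow Ga/M$ to split, equivalently a morphism $Gb \to Ga/M$ extending $\pi$. The plan is a Serre-dual argument mirroring Case A: using that $\sR^{ \opp }$ is also a dualising variety (\ref{bfhpg:dualising_variety}), I would deduce a dual of Lemma~\ref{lem:5}(ii) that lifts $\pi$ to a morphism $\tilde\pi \colon a \to w$ in $\sC$, where $w \in ( \Sigma^{ -1 } \sR ) * \sR$ satisfies $Gw \cong Ga/M$. Since $M \neq 0$, we have $\dim Ga/M < \dim Ga$, so $Ga$ cannot be a summand of $Gw$, and $\tilde\pi$ is therefore not a split monomorphism; the left-almost-split property of $\alpha \colon a \to b$ then factors $\tilde\pi = \tilde r \alpha$ for some $\tilde r \colon b \to w$, and $G\tilde r$ is the required extension of $\pi$. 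The main obstacle is setting up this Serre-dual lifting statement rigorously; once it is in hand, the rest of (ii) is a direct application of the almost-split properties of the AR triangle.
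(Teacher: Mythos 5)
Parts (i), (iii) and (iv) of your argument match the paper's, and your direct proof of (v) --- identifying the fibre over $(M,N)$ with the set of complements of $i(Ga)/i(M)$ in $p^{-1}(N)/i(M)$, a torsor over $\Hom_{\Mod\,\sR}(N,Ga/M)$ --- is a correct unpacking of the paper's citation of \cite[lem.\ 3.11]{CC}. Your Case~A of (ii) is also correct, and is in fact a tidy strengthening of the paper's treatment of the subcase $e=0$: splitting the pullback $0 \to Ga \to p^{-1}(N) \to N \to 0$ via Lemma \ref{lem:5} and right-almost-splitness of $\beta$, and then pushing out along $Ga \twoheadrightarrow Ga/M$, disposes of all $(e,f)$ with $f \neq [Gc]$ in one stroke.

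The genuine gap is Case~B, and it is exactly where you flag it. The ``Serre-dual of Lemma \ref{lem:5}(ii)'' you need is not available as stated: Lemma \ref{lem:5}(ii) makes $G$ surjective on morphisms \emph{out of} $Gz$ because, for $z \in (\Sigma^{-1}\sR)*\sR$, the module $Gz$ is finitely \emph{presented} by the projectives $P_r = G(\Sigma^{-1}r)$ and Yoneda identifies $\Hom_{\Mod\,\sR}(P_r,Gc)$ with $\sC(\Sigma^{-1}r,c)$. To lift a morphism \emph{into} $Gw$ you would instead need $Gw$ finitely \emph{copresented} by the injectives $I_r = G(\Sigma^{-1}Sr)$ of paragraph \ref{bfhpg:injectives}, so the right subcategory is $(\Sigma^{-2}S\sR)*(\Sigma^{-1}S\sR)$, not $(\Sigma^{-1}\sR)*\sR$, and the surjectivity of $\sC(a,w) \to \Hom_{\Mod\,\sR}(Ga,Gw)$ for such $w$ requires redoing all of Lemma \ref{lem:5} on the dual side (co-Yoneda, Serre duality, the dual Four Lemma). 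This is plausible but is precisely what you have not carried out, so the case $f=[Gc]$, $e \neq 0$ remains unproved. The paper sidesteps the dual machinery entirely: for $e \neq 0$ it chooses an epimorphism $P_r \twoheadrightarrow K$, lifts the composite $P_r \to K \hookrightarrow Ga$ to a \emph{nonzero} morphism $\varphi \colon \Sigma^{-1}r \to a$ (which \emph{is} covered by Lemma \ref{lem:5}(ii)), uses the defining property of the AR triangle to factor the connecting morphism $\gamma \colon c \to \Sigma a$ through $\Sigma\varphi$, and thereby builds a morphism of triangles over $\Delta$ whose middle component has image the required $L$ with $i^{-1}L = K$ and $pL = M$. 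Either substitute that construction for your Case~B, or prove the dual lifting lemma in full.
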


\begin{proof}
For (i) and (ii) let $( K,M ) \in \Gr_e ( Ga ) \times \Gr_f ( Gc )$
be given.  That is, $K \subseteq Ga$, $M \subseteq Gc$ are subobject
with $[K] = e$ and $[M] = f$.

(i)  When the sequence \eqref{equ:astast} is split exact we set $L = K
\oplus M \subseteq Ga \oplus Gc = Gb$ whence $i^{ -1 }L = K$, $pL = M$
so $\pi_{ e,f }(L) = (K,M)$.

(ii)  Pick $z \in ( \Sigma^{ -1 }\sR ) * \sR$ such that there is an
isomorphism $Gz \stackrel{\sim}{\rightarrow} M$, see Lemma
\ref{lem:5}(i).  Composing it with the inclusion $M \subseteq Gc$
gives a monomorphism $Gz \rightarrow Gc$ which has the form $G( z
\stackrel{\zeta}{\rightarrow} c )$ by Lemma \ref{lem:5}(ii).  Note that $M
= \Image G\zeta$.

First, suppose $e \neq 0$.  In this case, $K \neq 0$ by Equation
\eqref{equ:K}.

By paragraphs \ref{bfhpg:coherents} and \ref{bfhpg:fl} we can pick
$r \in \sR$ such that there is an epimorphism $P_r = G(
\Sigma^{ -1 }r ) \twoheadrightarrow K$.  Composing it with the
inclusion $K \subseteq Ga$ gives a morphism $G( \Sigma^{ -1 }r )
\rightarrow Ga$ which has the form $G( \Sigma^{ -1 }r
\stackrel{\varphi}{\rightarrow} a )$ by Lemma \ref{lem:5}(ii).  Note
that $K = \Image G\varphi$ and that $K \neq 0$ implies $\varphi \neq
0$.

We are assuming that there is an AR triangle $a \rightarrow b
\rightarrow c \stackrel{\gamma}{\rightarrow} \Sigma a$ and since
$\varphi$ and hence $\Sigma \varphi$ are non-zero, $\gamma$ factors as
$c \stackrel{\varepsilon}{\rightarrow} r \stackrel{\Sigma
\varphi}{\rightarrow} \Sigma a$.  We can spin this into the following
commutative diagram where the top row is also a distinguished
triangle.
\[
%@-3.5pc @!
  \xymatrix @-0.25pc {
    \Sigma^{ -1 }r \ar[r] \ar_{\varphi}[d] & y \ar[r] \ar_{\upsilon}[d] & z \ar^{\varepsilon \zeta }[r] \ar_{\zeta}[d] & r \ar^{ \Sigma \varphi }[d] \\
    a \ar[r] & b \ar[r] & c \ar_{ \gamma }[r] \ar^{\varepsilon}[ur] & \Sigma a
                     }
\]
Applying $G$ gives a commutative diagram with exact rows. 
\[
%@-3.5pc @!
  \xymatrix @-0.25pc {
    & G( \Sigma^{ -1 }r ) \ar[r] \ar_{G\varphi}[d] & Gy \ar[r] \ar_{G\upsilon}[d] & Gz \ar[r] \ar^{G\zeta}[d] & 0 \\
    0 \ar[r] & Ga \ar_i[r] & Gb \ar_p[r] & Gc \ar[r] & 0
                     }
\]
Set $L = \Image G\upsilon$.  A diagram chase using that $G\zeta$ is a
monomorphism shows $i^{ -1 }L = \Image G\varphi = K$ and $pL = \Image
G\zeta = M$ so $\pi_{ e,f }(L) = (K,M)$.

Secondly, suppose $e = 0$.  We are assuming $( e,f ) \neq ( 0,[Gc] )$
so $f \neq [ Gc ]$ follows.  In this case, $K = 0$ and $M \neq Gc$ by
Equation \eqref{equ:K}. 

Since $M = \Image G\zeta$ the morphism $Gz \stackrel{G\zeta}{\longrightarrow}
Gc$ is not an epimorphism, so $\zeta$ is not a split epimorphism, so $\zeta$
factors as $z \stackrel{\upsilon}{\rightarrow} b \rightarrow c$.  Applying
$G$ gives the following commutative diagram.
\[
%@-3.5pc @!
  \xymatrix @-0.25pc {
    & & Gz \ar@{=}[r] \ar_{G\upsilon}[d] & Gz \ar^{G\zeta}[d] & \\
    0 \ar[r] & Ga \ar_i[r] & Gb \ar_p[r] & Gc \ar[r] & 0
                     }
\]
Set $L = \Image G\upsilon$.  As above, a diagram chase using that
$G\zeta$ is a monomorphism shows $i^{ -1 }L = 0 = K$ and $pL = \Image
G\zeta = M$ so $\pi_{ e,f }(L) = (K,M)$.

(iii)  When $( e,f ) = ( 0,[Gc] )$, it is clear from part (iv) that
either $\pi_{ e,f }$ is surjective or $X_{ e,f } = \emptyset$.  The
former happens if and only if there is a subobject $L \subseteq Gb$
such that $i^{ -1 }L = 0$ and $pL = Gc$.  This is clearly equivalent
to the existence of a morphism $Gc \stackrel{q}{\rightarrow} Gb$ with
$pq = \id$, that is, equivalent to the short exact sequence
\eqref{equ:astast} being split exact.

(iv)  Follows from Equation \eqref{equ:K}.

(v)  See \cite[lem.\ 3.11]{CC} which is stated for AR sequences, but
has a proof that also works in the present situation.
\end{proof}

\section{Generalised friezes}
\label{sec:friezes}

This section shows Theorem \ref{thm:A} which is a refined version of
Theorem A from the introduction.

\begin{Definition}
\label{def:rho}
For $c \in \sC$ set
\[
  \rho_{ \sR }(c) = \sum_e \chi \big( \Gr_e( Gc ) \big).
\]
Recall that $Gc$ is the $\sR$-module $\sC( -,\Sigma c )|_{ \sR }$ and
$\Gr_e( Gc )$ is the Grassmannian of subobjects $M \subseteq Gc$ with
finite length and $[M] = e$, while $\chi$ is the Euler characteristic
defined by cohomology with compact support, see \cite[p.\ 93]{F}.  The
sum is over $e \in \K_0( \fl\, \sR )$. 

Note that if $Gc = 0$ then $\rho_{ \sR }( c ) = 1$.  However, for other
objects $c$ the formula may not make sense because $Gc$ may have
infinite length, in which case the sum may be infinite.
\end{Definition}

\begin{Definition}
If $\sR = \add\, R$ for a rigid object $R$ then we write
$\rho_R$ instead of $\rho_{ \sR }$; this is the situation from the
introduction. 

For the rest of this section and the next, $\rho_{ \sR }$ is
abbreviated to $\rho$.
\end{Definition}

\begin{Theorem}
\label{thm:A}
\begin{enumerate}

  \item  If $Gc$ is of finite length, then the formula for $\rho( c )$
    makes sense. 

\medskip

  \item If $c_1, c_2 \in \sC$ have $Gc_1$, $Gc_2$ of finite
  length, then $G( c_1 \oplus c_2 )$ has finite length and $\rho( c_1
  \oplus c_2 ) = \rho( c_1 )\rho( c_2 )$.

\medskip

  \item If
\[
  \Delta = \tau c \rightarrow b \rightarrow c
\]
is an AR triangle in $\sC$ and $G( \tau c )$, $Gc$ have finite length,
then so does $Gb$ and
\[
  \rho( \tau c )\rho( c ) - \rho( b )
  = \left\{
      \begin{array}{cl}
        0 & \mbox{ if $G( \Delta )$ is a split short exact sequence, } \\[2mm]
        1 & \mbox{ if $G( \Delta )$ is not a split short exact sequence. }
      \end{array}
    \right.
\]

\end{enumerate}
\end{Theorem}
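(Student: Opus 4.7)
The plan is to prove the three parts in order, using the submodule bookkeeping of Lemmas \ref{lem:union} and \ref{lem:pi} together with the case analysis of $G( \Delta )$ in Lemma \ref{lem:6a}.

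For part (i), I would observe that any subobject $M \subseteq Gc$ of a finite-length module is again of finite length and has $[M]$ appearing in any composition series of $Gc$, so only finitely many classes $e \in \K_0( \fl\, \sR )$ arise, and each $\Gr_e( Gc )$ is a constructible variety (realisable inside a usual Grassmannian after fixing a composition series) whose Euler characteristic with compact support is a well-defined integer. For part (ii), additivity of $G$ yields $G( c_1 \oplus c_2 ) = Gc_1 \oplus Gc_2$, which is of finite length with class $[Gc_1] + [Gc_2]$. I would then apply Lemmas \ref{lem:union} and \ref{lem:pi} to the manifestly split sequence $0 \to Gc_1 \to Gc_1 \oplus Gc_2 \to Gc_2 \to 0$: by Lemma \ref{lem:pi}(i),(v) each $\pi_{e,f}$ is surjective with affine-space fibres, so $\chi( X_{e,f} ) = \chi( \Gr_e( Gc_1 ) ) \chi( \Gr_f( Gc_2 ) )$, and summing over all $e,f$ gives the product formula.

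For part (iii), the key point is that Lemma \ref{lem:6a} splits the AR triangle into three mutually exclusive situations. In case (i), $Gc = P_r$ is local, $G( \tau c ) = 0$, and $Gb \cong \rad\, P_r$; every proper subobject of $P_r$ lies in $\rad\, P_r$, yielding a stratification of the full Grassmannian $\Gr( Gc )$ into $\{ P_r \}$ together with $\Gr( \rad\, P_r )$, hence $\rho( c ) = \rho( b ) + 1$, while $\rho( \tau c ) = 1$ and $G( \Delta )$ is visibly not a short exact sequence, so the difference equals $1$. Case (ii) is the formal dual: using that $I_r$ has essential simple socle $S_r$, the subobjects of $I_r$ decompose as $\{ 0 \}$ together with those containing $S_r$, and the latter are in constructible bijection $L \mapsto L / S_r$ with the subobjects of $\corad\, I_r = Gb$, giving $\rho( \tau c ) = \rho( b ) + 1$ while $\rho( c ) = 1$, again yielding difference $1$.

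The main case is (iii) of Lemma \ref{lem:6a}, where $G( \Delta )$ is a genuine short exact sequence $0 \to G( \tau c ) \to Gb \to Gc \to 0$; then $Gb$ has finite length and Setup \ref{set:Gr} applies, so Lemma \ref{lem:union} gives
\[
  \rho( b ) \;=\; \sum_{ e,f } \chi( X_{ e,f } ).
\]
For every pair $( e,f ) \neq ( 0,[Gc] )$, Lemma \ref{lem:pi}(ii),(v) provides a constructible surjection $X_{e,f} \twoheadrightarrow \Gr_e( G( \tau c ) ) \times \Gr_f( Gc )$ with affine-space fibres, so $\chi( X_{e,f} ) = \chi( \Gr_e( G( \tau c ) ) ) \chi( \Gr_f( Gc ) )$. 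The dichotomy is concentrated at the exceptional pair $( 0,[Gc] )$: by Lemma \ref{lem:pi}(iii),(iv) the target is a single point, while $X_{ 0,[Gc] }$ is a single affine fibre if $G( \Delta )$ is split and empty otherwise, contributing $1$ or $0$ versus the product value $\chi( \Gr_0( G( \tau c ) ) ) \chi( \Gr_{ [Gc] }( Gc ) ) = 1$. Subtracting $\rho( b )$ from $\rho( \tau c ) \rho( c ) = \sum_{ e,f } \chi( \Gr_e( G( \tau c ) ) ) \chi( \Gr_f( Gc ) )$ therefore leaves exactly $0$ in the split case and $1$ in the non-split case. The main obstacle is the careful accounting at the pair $( 0,[Gc] )$ and the use of multiplicativity of $\chi$ under constructible maps with affine fibres; the latter is classical and is essentially what is cited as \cite[lem.\ 3.11]{CC}, everything else being bookkeeping.
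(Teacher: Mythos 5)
Your proposal is correct and follows essentially the same route as the paper: the same three-way case split via Lemma \ref{lem:6a}, the same stratification $\Gr_g(Gb)=\bigcup X_{e,f}$ from Lemma \ref{lem:union}, and the same isolation of the exceptional pair $(0,[Gc])$ handled by Lemma \ref{lem:pi}(iii)+(iv). The only (immaterial) difference is that you treat the case $c\in\ind\,\sR$ directly via the essential socle of $I_r$ rather than by formally dualising the projective case.
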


\begin{proof}
(i)  If $Gc$ has finite length then $\Gr_e( Gc )$ is non-empty only for
finitely many values of $e$, see \cite[par.\ 1.6 and 1.8]{JP}.  Hence
the formula for $\rho$ makes sense.

(iii)  Consider the AR triangle $\Delta$ and suppose that $G( \tau c
)$, $Gc$ have finite length.  The exact sequence $G( \Delta )$ shows
that $Gb$ has finite length.  We now split into cases.

Case (a): $c = \Sigma^{-1}r \in \Sigma^{-1}\ind\, \sR$.  Lemma
\ref{lem:6a}(i) says
\[
  G( \Delta ) = 0 \rightarrow \rad\, P_r \rightarrow P_r;
\]
in particular $G( \Delta )$ is not a split short exact sequence.  We
have 
\begin{align*}
  \rho( c )
  & = \sum_e \chi \big( \Gr_e( P_r ) \big) \\
  & = \chi \big( \Gr_{ [P_r] }( P_r ) \big)
      + \sum_{ e \neq [P_r] } \chi \big( \Gr_e( P_r ) \big) \\
  & = 1
      + \sum_{ e } \chi \big( \Gr_e( \rad\, P_r ) \big) \\
  & = 1 + \rho( b ).
\end{align*}
The penultimate $=$ holds because Equation \eqref{equ:K} implies that
$\Gr_{ [P_r] }( P_r ) = \{\, P_r \,\}$ has only one point, and that
each subobject $M \subseteq P_r$ with $[M] \neq [P_r]$ is proper,
hence contained in $\rad\, P_r$.  Moreover,
\[
  \rho( \tau c ) = 1
\]
since $G( \tau c ) = 0$.  Combining the equations shows
\begin{equation}
\label{equ:1}
  \rho( \tau c )\rho( c ) - \rho( b ) = 1.
\end{equation}

Case (b): $c = r \in \ind\, \sR$.  We can use the dual argument to
Case (a), based on Lemma \ref{lem:6a}(ii).  We get that $G( \Delta )$
is not a split short exact sequence, and Equation
\eqref{equ:1} remains true.

Case (c): $c \not\in \Sigma^{ -1 }( \ind\, \sR ) \cup \ind\, \sR$.  We
will use the machinery of Section \ref{sec:Gr} so set $a \rightarrow b
\rightarrow c$ of Setup \ref{set:Gr} equal to $\Delta = \tau c
\rightarrow b \rightarrow c$.  The requirements of the Setup are
satisfied because $G( \Delta )$ is a short exact sequence by Lemma
\ref{lem:6a}(iii). 

We have
\begin{align*}
  \rho( \tau c )\rho( c )
  & = \sum_{ e,f } \chi \Big( \Gr_e\big( G( \tau c) \big) \Big) \chi \big( \Gr_f( Gc ) \big) \\
  & = \sum_{ e,f } \chi \Big( \Gr_e\big( G( \tau c) \big) \times \Gr_f( Gc ) \Big) \\
  & = \chi \Big( \Gr_0\big( G( \tau c) \big) \times \Gr_{[Gc]}( Gc ) \Big) \\
  & \;\;\;\;\;\;\;\;\; + \sum_{ ( e,f ) \neq ( 0,[Gc] ) } \chi \Big( \Gr_e\big( G( \tau c) \big) \times \Gr_f( Gc ) \Big) \\
  & = \chi \Big( \Gr_0\big( G( \tau c) \big) \times \Gr_{[Gc]}( Gc ) \Big)
    + \sum_{ ( e,f ) \neq ( 0,[Gc] ) } \chi ( X_{ e,f }).
\end{align*}
The second $=$ is by \cite[p.\ 92, item (4)]{F} and the last $=$ is by
\cite[p.\ 93, exercise]{F} and Lemma \ref{lem:pi}(ii)+(v).  On the
other hand,
\[
  \rho( b )
  = \sum_g \chi \big( \Gr_g ( Gb ) \big)
  = \sum_{ e,f } \chi( X_{ e,f } )
  = \chi( X_{ 0,[Gc] } ) 
      + \sum_{ ( e,f ) \neq ( 0,[Gc] ) } \chi ( X_{ e,f }),
\]
where the second $=$ is by \cite[p.\ 92, item (3)]{F} and Lemma
\ref{lem:union}.  It follows that
\[
  \rho( \tau c )\rho( c ) - \rho( b )
  = \chi \Big( \Gr_0\big( G( \tau c) \big) \times \Gr_{[Gc]}( Gc ) \Big)
    - \chi( X_{ 0,[Gc] } )
  = ( \dagger ).
\]
If $G( \Delta )$ is split exact, then $\pi_{ 0,[Gc] }$ is surjective
by Lemma \ref{lem:pi}(i) whence $( \dagger ) = 0$ by \cite[p.\ 93,
exercise]{F} and Lemma \ref{lem:pi}(v).  If $G( \Delta )$ is not split
exact, then Lemma \ref{lem:pi}(iii)+(iv) implies $( \dagger ) = 1-0 =
1$.

(ii)  Suppose that $Gc_1$, $Gc_2$ have finite length.  It is clear that
$G( c_1 \oplus c_2 )$ has finite length.  Set $a \rightarrow b
\rightarrow c$ of Setup \ref{set:Gr} equal to $c_1 \rightarrow c_1
\oplus c_2 \rightarrow c_2$.  A simplified version of the above computation
for Case (c), using part (i) of Lemma \ref{lem:pi} instead of part
(ii), shows $\rho( c_1 \oplus c_2 ) = \rho( c_1 )\rho( c_2 )$.
\end{proof}

\begin{Definition}
\label{def:frieze}
Let $A$ be a commutative ring.  A generalised frieze on $\sC$ with
values in $A$ is a map $\varphi : \obj\, \sC \rightarrow A$ satisfying
\begin{enumerate}

  \item  $\varphi( c_1 \oplus c_2 ) = \varphi( c_1 )\varphi( c_2 )$.

\medskip

  \item  If $\tau c \rightarrow b \rightarrow c$ is an AR triangle in
    $\sC$ then $\varphi( \tau c )\varphi( c ) - \varphi( b )$ equals
    $0$ or $1$. 

\end{enumerate}
\end{Definition}

\begin{Corollary}
\label{cor:rho}
If $Gc$ has finite length for each $c \in \sC$, then $\rho$ is
a generalised frieze with values in $\BZ$. 
\end{Corollary}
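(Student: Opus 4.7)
The plan is to deduce the corollary directly from Theorem \ref{thm:A} by checking that each of the three conditions implicit in being a generalised frieze with values in $\BZ$ follows from the finiteness hypothesis on $Gc$.

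First, I would verify that the formula defining $\rho(c)$ makes sense and yields an integer for every $c \in \sC$. The hypothesis says $Gc$ has finite length, so Theorem \ref{thm:A}(i) applies and the sum $\sum_{e} \chi(\Gr_e(Gc))$ is finite. Moreover, each $\Gr_e(Gc)$ is a constructible subset of a classical Grassmannian, so its Euler characteristic (via cohomology with compact support) is an integer. Hence $\rho : \obj\,\sC \rightarrow \BZ$.

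Second, I would check the two axioms of Definition \ref{def:frieze}. For axiom (i), given $c_1, c_2 \in \sC$ the hypothesis ensures that $Gc_1$ and $Gc_2$ both have finite length, so Theorem \ref{thm:A}(ii) yields $\rho(c_1 \oplus c_2) = \rho(c_1)\rho(c_2)$. For axiom (ii), given an AR triangle $\tau c \rightarrow b \rightarrow c$ in $\sC$, the hypothesis guarantees that $G(\tau c)$ and $Gc$ have finite length, and then Theorem \ref{thm:A}(iii) shows that $\rho(\tau c)\rho(c) - \rho(b)$ is either $0$ or $1$, depending on whether $G(\Delta)$ is a split short exact sequence.

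There is no real obstacle here; the corollary is simply a packaging of Theorem \ref{thm:A} under the global finiteness assumption, matching each clause of Definition \ref{def:frieze} with the corresponding clause of the theorem.
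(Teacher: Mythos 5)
Your proposal is correct and matches the paper's proof, which simply states that the corollary is immediate from Theorem \ref{thm:A}; you have merely spelled out the matching of clauses of the theorem with those of Definition \ref{def:frieze}, which is exactly the intended argument.
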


\begin{proof}
Immediate from Theorem \ref{thm:A}. 
\end{proof}

\begin{Remark}
Theorem A in the introduction follows from this since it is clear that each $Gc$ 
has finite length when $\sR = \add\, R$ for a rigid object $R$.

However, Theorem \ref{thm:A} is a bit finer because it also deals with
situations where $\rho$ is not defined on every $c \in \sC$.
\end{Remark}

\section{An extension formula}
\label{sec:extension}

This section shows Proposition \ref{pro:extension} which is akin to
the ``exchange relation'' or ``multiplication property'' for cluster
maps, albeit in a special case.  See \cite[introduction]{CK2} and
\cite[introduction]{Palu}.

\begin{Setup}
\label{set:extension}
In this section $\sC$ is assumed to be $2$-Calabi-Yau, that is, its Serre functor is
$S = \Sigma^2$.

Moreover, $m \in \ind\, \sC$ and $r \in \ind \sR$ denote objects satisfying
\[
  \dim_{ \BC } \Ext_{ \sC }^1( r,m )
  = \dim_{ \BC } \Ext_{ \sC }^1( m,r )
  = 1,
\]
and $m \rightarrow a \rightarrow r$ and $r \rightarrow b \rightarrow m$
are the ensuing non-split extensions.
\end{Setup}

\begin{Remark}
\label{rmk:extension}
Being more verbose, we have the following distinguished triangles with
$\delta, \varepsilon \neq 0$. 
\[
\;\;\;\;\;\;
  m
  \stackrel{ \mu }{ \rightarrow } a
  \rightarrow r
  \stackrel{ \delta }{ \rightarrow } \Sigma m
\;\;,\;\;
  r
  \rightarrow b
  \stackrel{ \beta }{ \rightarrow } m
  \stackrel{ \varepsilon }{ \rightarrow } \Sigma r.
\]
Applying $G$ gives exact sequences in $\Mod\, \sR$,
\[
  G( \Sigma^{ -1 }r )
  \stackrel{ G( \Sigma^{-1}\delta ) }{ \longrightarrow } Gm
  \stackrel{ G\mu }{ \longrightarrow } Ga
  \rightarrow 0
\;\;,\;\;
  0
  \rightarrow Gb
  \stackrel{ G\beta }{ \longrightarrow } Gm
  \stackrel{ G\varepsilon }{ \longrightarrow }
  G( \Sigma r ).
\]
\end{Remark}

\begin{Lemma}
\label{lem:dichotomy}
If $M \subseteq Gm$ then either $\Ker G\mu \subseteq M$ or $M
\subseteq \Image G\beta$, but not both.
\end{Lemma}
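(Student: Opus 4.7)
My plan is to exploit the $2$-Calabi-Yau property, which gives $S = \Sigma^2$ and a nondegenerate Serre duality pairing $\sC(x, Sy) \cong \dual\,\sC(y, x)$, together with the one-dimensionality of the two $\Ext^1$ spaces in Setup \ref{set:extension}. From Remark \ref{rmk:extension} I will use the identifications $\Ker G\mu = \Image G(\Sigma^{-1}\delta)$ and $\Image G\beta = \Ker G\varepsilon$ throughout.

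For the ``not both'' direction, it suffices to show $\Ker G\mu \not\subseteq \Image G\beta$, since if both inclusions held for some $M$ they would chain to give $\Ker G\mu \subseteq \Image G\beta$. This inclusion is equivalent to the composite $G\varepsilon \circ G(\Sigma^{-1}\delta) = 0$ in $\Mod\,\sR$. Since the functor $G$ turns a composition in $\sC$ into post-composition with the suspension, evaluating this natural transformation at $r \in \sR$ on $\id_r$ yields $\Sigma\varepsilon \circ \delta = 0$ in $\sC(r, \Sigma^2 r) = \sC(r, Sr)$. The Serre trace of this element is the value of the Serre pairing $\Ext^1(r, m) \otimes \Ext^1(m, r) \to \BC$ at $(\delta, \varepsilon)$, and nondegeneracy on one-dimensional spaces forces this value to be nonzero whenever $\delta$ and $\varepsilon$ are, contradicting the vanishing.

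For the ``either'' direction, I would assume $M \not\subseteq \Image G\beta = \Ker G\varepsilon$ and hunt for $\delta$ inside $M$. Concretely, some $r' \in \sR$ carries $g \in M(r') \subseteq \sC(r', \Sigma m)$ with $\Sigma\varepsilon \circ g \neq 0$ in $\sC(r', Sr)$. Serre duality $\sC(r', Sr) \cong \dual\,\sC(r, r')$ then supplies $\alpha \in \sC(r, r')$ such that $(\Sigma\varepsilon) \circ g \circ \alpha \neq 0$. Now $g \circ \alpha$ lies in the one-dimensional space $\sC(r, \Sigma m) = \Ext^1(r, m) = \BC\delta$, so $g \circ \alpha = \lambda\delta$ for a scalar $\lambda$; comparing with the already-established $\Sigma\varepsilon \circ \delta \neq 0$ forces $\lambda \neq 0$. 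Since $M$ is a subfunctor of $Gm$, the action of $\alpha \in \sR(r, r')$ carries $g \in M(r')$ to $g \circ \alpha \in M(r)$, and hence $\delta \in M(r)$. A further application of the subfunctor property gives $\delta \circ f \in M(r'')$ for every $f \in \sR(r'', r)$, so $\Ker G\mu = \Image G(\Sigma^{-1}\delta) \subseteq M$.

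I expect the main obstacle to be the Serre-duality step in the ``either'' direction, where one must descend from an arbitrary $g$ living over some $r'$ back down to the universal element $\delta$ over $r$. The one-dimensionality of $\Ext^1(r, m)$ is what converts the bookkeeping into a clean scalar match, and the $2$-Calabi-Yau hypothesis appears essential: without Serre duality one cannot convert the nonvanishing of $\Sigma\varepsilon \circ g$ into the nonvanishing of $\Sigma\varepsilon \circ g \circ \alpha$ for some $\alpha \in \sR(r, r')$, and the ``not both'' direction then falls out of the same pairing computation.
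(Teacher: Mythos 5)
Your proof is correct and follows essentially the same route as the paper's: both halves come down to Serre duality together with the one-dimensionality of $\Ext^1_{\sC}(r,m)$ and $\Ext^1_{\sC}(m,r)$, and your ``either'' direction (produce $\alpha$ with $\Sigma(\varepsilon)\, g\, \alpha \neq 0$, identify $g\alpha$ as a non-zero multiple of $\delta$ in the one-dimensional space $\sC(r,\Sigma m)$, then push $\delta$ into $M$ by the subfunctor property) is the paper's argument in only slightly different clothing. The one place you diverge is the key non-vanishing $\Sigma(\varepsilon)\delta \neq 0$ in the ``not both'' half: the paper obtains it by factoring the connecting morphism $\sigma$ of the AR triangle $\Sigma r \rightarrow y \rightarrow r \stackrel{\sigma}{\rightarrow} \Sigma^2 r$ through $\delta$ and comparing $\Sigma\varepsilon$ with the resulting $\psi$ in the one-dimensional space $\sC(\Sigma m,\Sigma^2 r)$, whereas you read it off directly from the nondegeneracy of the Serre pairing $\sC(r,\Sigma m)\times\sC(\Sigma m,\Sigma^2 r)\rightarrow\BC$ between two one-dimensional spaces. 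Your version is a little more streamlined since it avoids invoking the AR triangle at $r$; the underlying content is the same, as that connecting morphism is exactly the almost-vanishing trace form furnished by Serre duality.
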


\begin{proof}
Equivalently, either $\Image G( \Sigma^{-1}\delta ) \subseteq M$ or
$M \subseteq \Ker G\varepsilon$, but not both.

``Not both'': Since $\sC$ is $2$-Calabi-Yau, its AR translation is
$\tau = \Sigma$, so there is an AR triangle $\Sigma r \rightarrow y
\rightarrow r \stackrel{ \sigma }{ \rightarrow } \Sigma^2 r$.  The
morphism $r \stackrel{ \delta }{ \rightarrow } \Sigma m$ is non-zero,
so $\sigma$ factors as $r \stackrel{ \delta }{ \rightarrow } \Sigma m
\stackrel{ \psi }{ \rightarrow } \Sigma^2 r$.  Since $\psi\delta =
\sigma \neq 0$, we have $\psi \neq 0$.  It therefore follows from
$\dim_{ \BC } \sC( \Sigma m,\Sigma^2 r ) = \dim_{ \BC } \sC( m,\Sigma
r ) = 1$ that $\Sigma m \stackrel{ \Sigma \varepsilon}{
\longrightarrow } \Sigma^2 r$ is a non-zero scalar multiple of $\psi$,
whence $\psi\delta \neq 0$ implies $\Sigma( \varepsilon )\delta \neq
0$.  Hence $G( \varepsilon \Sigma^{ -1 }\delta ) \neq 0$, because this
morphism is
\[
  \xymatrix @-0.25pc {
    \sC( - , r ) |_{ \sR }
    \ar^{ ( \Sigma( \varepsilon )\delta )_{ * } }[rr] & &
    \sC( - , \Sigma^2 r ) |_{ \sR }.
            }
\]

Now suppose $\Image G( \Sigma^{ -1 }\delta ) \subseteq M$.  Applying
$G\varepsilon$ gives $\Image G( \varepsilon \Sigma^{ -1 }\delta )
\subseteq ( G\varepsilon )M$.  By what we showed above, this implies
$( G\varepsilon )M \neq 0$, that is $M \not\subseteq \Ker
G\varepsilon$ as claimed. 

``Either/or'': Suppose that $M \not\subseteq \Ker G\varepsilon$.
Since $G\varepsilon$ is
\[
  \xymatrix @-0.75pc {
    \sC( - , \Sigma m ) |_{ \sR }
    \ar^{ ( \Sigma \varepsilon )_{ * } }[rr] & &
    \sC( - , \Sigma^2 r ) |_{ \sR }
            }
\]
this means there exist $r' \in \ind\, \sR$ and a morphism $r'
\stackrel{ \rho' }{ \rightarrow } \Sigma m$ in $M( r' )$ such that
the composition $r' \stackrel{ \rho' }{ \rightarrow } \Sigma m
\stackrel{ \Sigma \varepsilon }{ \longrightarrow } \Sigma^2 r$ is
non-zero.  Hence the map $\sC( \Sigma m,\Sigma^2 r ) \stackrel{
  \rho'^{*} }{ \rightarrow } \sC( r',\Sigma^2 r )$ is non-zero, whence
the lower horizontal map is non-zero in the following commutative
square which exists by Serre duality.
\[
  \xymatrix @! @C=2.5pc @R=-3.0pc {
    \sC( r,r' ) \ar^{ \rho'_{*} }[r] \ar_{ \cong }[d]
    & \sC( r,\Sigma m ) \ar^{ \cong }[d] \\
    \dual\!\sC( r',\Sigma^2 r ) \ar_-{ \dual( \rho'^{*} ) }[r]
    & \dual\!\sC( \Sigma m,\Sigma^2 r )
               }
\]
It follows that the upper horizontal map is non-zero, so surjective
since $\dim_{ \BC } \sC( r,\Sigma m ) = 1$ by assumption.  Hence
$r \stackrel{ \delta }{ \rightarrow } \Sigma m$ factors as $r
\rightarrow r' \stackrel{ \rho' }{ \rightarrow } \Sigma m$.

However, for $r'' \in \sR$ each element of $ \big(
\Image G( \Sigma^{ -1 }\delta ) \big)( r'' )$ is a composition $r''
\rightarrow r \stackrel{ \delta }{ \rightarrow } \Sigma m$.  By what
we have showed, such a composition can also be written as a
composition $r'' \rightarrow r' \stackrel{ \rho' }{ \rightarrow }
\Sigma m$ so is in $M( r'' )$.  Hence $\Image G( \Sigma^{ -1 }\delta
) \subseteq M$ as desired.
\end{proof}

\begin{Proposition}
\label{pro:extension}
In the situation of Setup \ref{set:extension}, if $Gm$ has finite
length then so do $Ga$ and $Gb$, and 
\[
  \rho( m ) = \rho( a ) + \rho( b ).
\]
\end{Proposition}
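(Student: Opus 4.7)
The plan is to use Lemma \ref{lem:dichotomy} to partition $\Gr(Gm)$ and match each half with $\Gr(Ga)$ or $\Gr(Gb)$.

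First I would verify the finite length claims. The exact sequences in Remark \ref{rmk:extension} show that $Ga \cong Gm / \Ker G\mu$ is a quotient of $Gm$ and $Gb \cong \Image G\beta$ is a subobject of $Gm$. Paragraph \ref{bfhpg:fl} then forces $Ga$ and $Gb$ into $\fl\, \sR$ as soon as $Gm$ lies there, and so $\rho(a)$ and $\rho(b)$ are well-defined by Theorem \ref{thm:A}(i).

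Next I would introduce the constructible subsets
\[
  U_a = \{\, M \in \Gr(Gm) \mid \Ker G\mu \subseteq M \,\},
  \qquad
  U_b = \{\, M \in \Gr(Gm) \mid M \subseteq \Image G\beta \,\}.
\]
Lemma \ref{lem:dichotomy} gives the disjoint union $\Gr(Gm) = U_a \sqcup U_b$. By paragraph \ref{bfhpg:constructible_maps} the maps $M \mapsto G\mu(M)$ and $N \mapsto (G\mu)^{-1}(N)$ are constructible and, restricted to $U_a$, are mutually inverse bijections $U_a \leftrightarrow \Gr(Ga)$; mutual inversion uses $\Ker G\mu \subseteq M$ on one side and surjectivity of $G\mu$ on the other. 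Similarly $M \mapsto (G\beta)^{-1}(M)$ and $N \mapsto G\beta(N)$ give constructible bijections $U_b \leftrightarrow \Gr(Gb)$, since $G\beta$ is monic onto $\Image G\beta$. Setting $e_0 = [\Ker G\mu]$, the short exact sequence $0 \to \Ker G\mu \to M \to G\mu(M) \to 0$ for $M \in U_a$ gives $[M] = e_0 + [G\mu(M)]$, while the iso $M \cong (G\beta)^{-1}(M)$ for $M \in U_b$ gives $[M] = [(G\beta)^{-1}(M)]$. Thus the bijections refine to $U_a \cap \Gr_e(Gm) \leftrightarrow \Gr_{e-e_0}(Ga)$ and $U_b \cap \Gr_e(Gm) \leftrightarrow \Gr_e(Gb)$.

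Finally I would collect Euler characteristics. Each of the refined bijections is a constructible bijection, so its fibres are single points and hence by the exercise on p.~93 of \cite{F} the Euler characteristics are equal. Combining this with additivity of $\chi$ on disjoint unions of constructible sets and reindexing $e \mapsto e - e_0$ in the first sum yields
\[
  \rho(m)
  = \sum_e \chi(\Gr_e(Gm))
  = \sum_e \chi\bigl( \Gr_{e-e_0}(Ga) \bigr) + \sum_e \chi\bigl( \Gr_e(Gb) \bigr)
  = \rho(a) + \rho(b).
\]
The main obstacle will be the bookkeeping around Lemma \ref{lem:dichotomy}: verifying that ``not both'' truly yields a disjoint partition (which forces $\Ker G\mu \ne 0$ and $\Image G\beta \ne Gm$, consistent with $\delta, \varepsilon \ne 0$), and confirming that the image and preimage operations of paragraph \ref{bfhpg:constructible_maps} restrict to honest inverse constructible bijections between $U_a$, $U_b$ and the target Grassmannians, so that Fulton's invariance of $\chi$ under constructible bijections actually applies.
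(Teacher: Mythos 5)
Your proposal is correct and follows essentially the same route as the paper: the paper's proof also uses Lemma \ref{lem:dichotomy} to write $\Gr_e(Gm)$ as the disjoint union of the images of the constructible injections $K \mapsto (G\mu)^{-1}K$ from $\Gr_{e-[\Ker G\mu]}(Ga)$ and $L \mapsto (G\beta)L$ from $\Gr_e(Gb)$, then applies additivity of $\chi$ and sums over $e$. Your sets $U_a$, $U_b$ are exactly the images of those injections, and your finite-length argument and $\K_0$-bookkeeping match the paper's.
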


\begin{proof}
The claim about lengths follows from the exact sequences in Remark
\ref{rmk:extension}.  

When $Gm$ has finite length there are injections
\[
  \xymatrix @R=3pt @C=1.0pc {
    & \Gr_{ e - [ \Ker G\mu ] }( Ga ) \ar@{^{(}->}[r] & \Gr_e( Gm ) & \Gr_e( Gb ) \ar@{_{(}->}[l] & \!\!\!\!\!\!\!\!\lefteqn{,} \\
    & K \ar@{|->}[r] & ( G\mu )^{ -1 } K & & \!\!\!\!\!\!\!\!\lefteqn{,} \\
    & & ( G\beta ) L & L \ar@{|->}[l] & \!\!\!\!\!\!\!\!\lefteqn{.}
            }
\]
The images are constructible by paragraph
\ref{bfhpg:constructible_maps} and they are disjoint with union equal
to $\Gr_e( Gm )$ by Lemma \ref{lem:dichotomy}, whence
\[
  \chi \big( \Gr_e( Gm )  \big)
  = \chi \big( \Gr_{ e - [ \Ker G\mu ] }( Ga ) \big)
  + \chi \big( \Gr_e( Gb ) \big)
\]
by \cite[p.\ 92, item (3)]{F}.  Summing over $e \in \K_0( \fl\, \sR )$
proves the proposition.
\end{proof}

\begin{Remark}
\label{rmk:extension2}
Since $Gr = 0$ we have $\rho( r ) = 1$, so the proposition can also be
written
\begin{equation}
\label{equ:extension2}
  \rho( m )\rho( r ) = \rho( a ) + \rho( b ).
\end{equation}
This makes it clearer that it is akin to the ``exchange
relation'' or ``multiplication property'' for cluster characters, see
\cite[introduction]{CK2} and \cite[introduction]{Palu}.

If $r \in \ind\, \sC$ then Equation \eqref{equ:extension2} holds for
cluster characters but may fail for $\rho$, see Remark
\ref{rmk:extension3}. 
\end{Remark}

\section{The generalised friezes of \cite{BHJ}}
\label{sec:BHJ}

This section shows Theorem \ref{thm:B} which is a reformulation of
Theorem B in the introduction.

\begin{Setup}
\label{set:Type_A}
In this section, $n \geq 3$ is an integer, $\sC = \sC( A_n )$ is the
cluster category of type $A_n$, see \cite{BMRRT} and \cite{CCS}, and
$R$ is a rigid object of $\sC$ without repeated indecomposable
summands.  We set $\sR = \add\, R$, see paragraph \ref{bfhpg:rigid}.
\end{Setup}

\begin{bfhpg}
[Coordinates and diagonals]
\label{bfhpg:Type_A}
It is clear that $Gc$ has finite length for each $c \in \sC$, and
well known that $\sC$ and $\sR$ satisfy the conditions of Setups
\ref{set:blanket} and \ref{set:extension}, so the results of Sections
\ref{sec:friezes} and \ref{sec:extension} apply.

The following properties were shown in \cite{CCS}: the AR quiver of
$\sC$ is $\BZ A_n$ modulo a certain glide reflection.  There is a
coordinate system on the AR quiver of $\sC$, part of which is shown in
Figure \ref{fig:coordinates}.  It is continued with the stipulation
that the order of the coordinates does not matter and individual
coordinates are taken modulo $n+3$; this emulates the action of the
glide reflection.
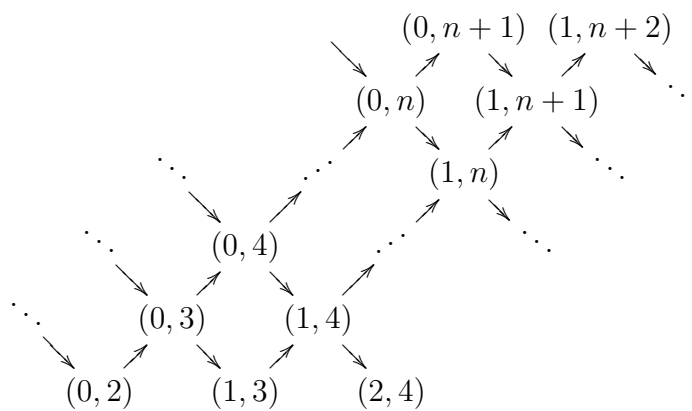
\begin{figure}
\[
  \vcenter{
  \xymatrix @+0.3pc @!0 {
& & & & {\;} \ar[dr] & & (0,n+1) \ar[dr] & & (1,n+2) \ar[dr] & \\
& & & & & (0,n) \ar[dr] \ar[ur] & & (1,n+1)\ar[dr] \ar[ur] & & \ddots \\
& & \ddots \ar[dr] & & \adots \ar[ur] & & (1,n) \ar[dr] \ar[ur] & & \ddots & \\
& \ddots \ar[dr] & & (0,4) \ar[dr] \ar[ur] & & \adots \ar[ur] & & \ddots & & \\
\ddots \ar[dr] & & (0,3) \ar[dr] \ar[ur] & & (1,4) \ar[dr] \ar[ur] & & & & & \\
& (0,2) \ar[ur] & & (1,3) \ar[ur] & & (2,4) & & & & \\
            }
          }
\]
\caption{The coordinate system on the AR quiver of $\sC( A_n )$.}
\label{fig:coordinates}
\end{figure}
We think of the coordinate pair $( i,j )$ as the diagonal connecting
vertices $i$ and $j$ in a regular $( n+3 )$-gon $P$ with vertex set
$\{\, 0, \ldots, n+2 \,\}$.  This identifies the indecomposable
objects of $\sC$ with the diagonals of $P$.  The identification has
the property that if $M, S \in \ind\, \sC$ then
\begin{equation}
\label{equ:cross}
  \dim_{ \BC } \Ext^1_{ \sC }( M,S )
  = \left\{
      \begin{array}{cl}
        1 & \mbox{ if $M$ and $S$ cross, } \\[1mm]
        0 & \mbox{ if not. }
      \end{array}
    \right.
\end{equation}
In particular, the indecomposable summands of the rigid object $R$ is
a set of pairwise non-crossing diagonals of $P$, that is, a polygon
dissection of $P$ which will also be denoted by $R$.
\end{bfhpg}

\begin{bfhpg}
[The generalised friezes of \cite{BHJ}]
\label{bfhpg:BHJ}
Let us recall the algorithm of \cite[sec.\ 3]{BHJ} which uses the
polygon dissection $R$ of the $( n+3 )$-gon $P$ to define a
generalised frieze on $\sC = \sC( A_n )$.  Note that in \cite{BHJ}
the polygon dissection was assumed to be a higher angulation, but this
restriction is unnecessary.

Define non-negative integers $m_R( i,j )$, indexed by vertices $i,j$
of $P$, by the following inductive procedure:

Let $i$ be fixed.  Set $m_R( i,i ) = 0$.  The polygon dissection $R$
splits $P$ into smaller polygonal pieces.  If $\alpha$ is a piece
containing $i$, and $j$ is another vertex of $\alpha$, then set $m_R(
i,j ) = 1$.  If $\alpha$ is an piece not containing $i$, then we can
assume that there is a piece $\alpha'$ sharing an edge $( k,\ell )$
with $\alpha$, such that $m_R( i,j )$ has already been defined for
the vertices $j$ of $\alpha'$.  Set
\begin{equation}
\label{equ:BHJ}
  m_R( i,j ) = m_R( i,k ) + m_R( i,\ell )
\end{equation}
for each vertex $j \neq k,\ell$ of $\alpha$.  Note that $( k,\ell )$
is a diagonal in $R$, that is, an indecomposable summand of $R$.

It was proved in \cite[thm.\ 3.3]{BHJ} that $m_R( i,j ) = m_R( j,i )$,
so $m_R$ can be viewed as being defined on the diagonals of $P$, that
is, on the indecomposable objects of $\sC$.  It is extended to all
objects by the rule $m_R( c_1 \oplus c_2 ) = m_R( c_1 )m_R( c_2 )$.

Moreover, the AR triangles in $\sC$ have the form
\[
  ( i-1,j-1 )
  \rightarrow ( i-1,j ) \oplus ( i,j-1 )
  \rightarrow ( i,j )
\]
where $( i-1,j )$ and $( i,j-1 )$ have to be interpreted as $0$ if
their coordinates are neighbouring vertices of $P$, and it was proved
in \cite[thm.\ 5.1]{BHJ} that each difference
\begin{equation}
\label{equ:difference}
  m_R( i-1,j-1 )m_R( i,j ) - m_R( i-1,j )m_R( i,j-1 ) 
\end{equation}
equals $0$ or $1$.

Hence $m_R$ is a generalised frieze on $\sC$. 
\end{bfhpg}

\begin{Theorem}
\label{thm:B}
Consider the situation of Setup \ref{set:Type_A}.  The rigid object
$R$ gives a polygon dissection of the $( n+3 )$-gon $P$, see paragraph
\ref{bfhpg:Type_A}, and the dissection gives a generalised frieze $m_R$ on
$\sC$, see paragraph \ref{bfhpg:BHJ}.

The rigid object $R$ also gives a generalised friese $\rho_R$ on
$\sC$, see Definition \ref{def:rho} and Corollary \ref{cor:rho}.  

These generalised friezes agree, that is, $m_R = \rho_R$.
\end{Theorem}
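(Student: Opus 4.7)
The plan is to reduce everything to an induction on the number of summands of $R$ crossed by a diagonal. Since both $m_R$ and $\rho_R$ are multiplicative on direct sums (by construction of the former and by Corollary~\ref{cor:rho} for the latter), it suffices to check $m_R(m) = \rho_R(m)$ on each indecomposable $m$, i.e.\ on each diagonal of $P$.

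For the base case, suppose that $m = (i,j)$ does not cross any summand of $R$, equivalently that $i$ and $j$ lie in a common piece of the dissection. Then by \eqref{equ:cross} we have $Gm(r) = \sC(r,\Sigma m) = \Ext^1_{\sC}(r,m) = 0$ for every indecomposable summand $r$ of $R$, so $Gm = 0$ and $\rho_R(m) = 1$ by the last sentence of Definition~\ref{def:rho}. The BHJ algorithm assigns $m_R(m) = 1$ directly in this case. (In particular, $\rho_R = m_R = 1$ on all sides of $P$, viewed as zero objects, and on all indecomposable summands of $R$.)

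For the inductive step, let $m = (i,j)$ be a diagonal crossing at least one summand of $R$. Choose, as in the BHJ algorithm, a summand $r = (k,\ell)$ of $R$ that bounds the piece $\alpha$ containing $j$ on the side facing $i$; then $r$ crosses $m$. By \eqref{equ:cross} we have $\dim_{\BC} \Ext^1_{\sC}(r,m) = \dim_{\BC} \Ext^1_{\sC}(m,r) = 1$, so Setup~\ref{set:extension} applies. A standard analysis in the CCS model identifies the two non-split extensions with the two resolutions of the crossing: assuming the cyclic order $i,k,j,\ell$ on $P$, one has
\[
  a \cong (i,k) \oplus (j,\ell), \qquad b \cong (i,\ell) \oplus (j,k),
\]
where sides of $P$ are interpreted as $0$. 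Proposition~\ref{pro:extension} combined with multiplicativity yields
\[
  \rho_R(m) = \rho_R(i,k)\,\rho_R(j,\ell) + \rho_R(i,\ell)\,\rho_R(j,k).
\]
Since $j,k,\ell$ all lie in $\alpha$, the base case gives $\rho_R(j,\ell) = \rho_R(j,k) = 1$, hence
\[
  \rho_R(m) = \rho_R(i,k) + \rho_R(i,\ell),
\]
which is exactly the BHJ recurrence \eqref{equ:BHJ}.

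To close the induction, note that $(i,k)$ and $(i,\ell)$ each share an endpoint with $r$, so neither can cross $r$; and because $j,k,\ell$ lie in the same piece $\alpha$, the vertices $k$ and $\ell$ are separated from $i$ by exactly the same remaining summands of $R$ as $j$ is (since every $r' \in R \setminus \{r\}$ leaves $\alpha$ entirely on one side of itself). Consequently $(i,k)$ and $(i,\ell)$ cross strictly fewer summands of $R$ than $(i,j)$, and the inductive hypothesis combined with the common recurrence gives $\rho_R(m) = m_R(m)$. The main technical hurdle I anticipate is verifying the identification of $a$ and $b$ with the geometric crossing resolutions in the CCS model; once that dictionary is in hand the rest is mechanical, with Proposition~\ref{pro:extension} doing the categorical heavy lifting.
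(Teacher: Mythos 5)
Your proposal is correct and is essentially the paper's own proof: the same reduction to diagonals via multiplicativity, the same base case ($Gm=0$ by \eqref{equ:cross} when $(i,j)$ crosses no summand of $R$), and the same inductive step applying Proposition~\ref{pro:extension} to the first dissection diagonal $r=(k,\ell)$ crossed by $(i,j)$, with the two companions $(j,k),(j,\ell)$ contributing $1$ because they lie inside the piece $\alpha$ --- the paper likewise takes the identification of the two non-split extensions with the crossing resolutions as known from \cite{CCS} (its Figure~\ref{fig:1}). The only differences are cosmetic: the paper indexes the induction by propagating outward from $i$ piece by piece (mirroring the BHJ algorithm) rather than by the number of crossed summands, and your parenthetical claim that $k$ and $\ell$ are separated from $i$ by \emph{exactly} the same remaining summands as $j$ is slightly overstated (a summand of $R$ ending at $k$ may cross $(i,j)$ but not $(i,k)$), though only the containment, hence ``strictly fewer,'' is needed to close the induction.
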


\begin{proof}
Since $m_R( c_1 \oplus c_2 ) = m_R( c_1 )m_R( c_2 )$ by definition and
$\rho_R( c_1 \oplus c_2 ) = \rho_R( c_1 )\rho_R( c_2 )$ by Theorem
\ref{thm:A}(ii), it is enough to let $i$ be a fixed vertex of $P$ and
show
\begin{equation}
\label{equ:B}
  m_R( i,j ) = \rho_R \big( ( i,j ) \big )
\end{equation}
for each vertex $j$ of $P$, and we do so inductively:

The polygon dissection $R$ splits $P$ into smaller polygonal pieces.
If $\alpha$ is a piece containing $i$, and $j$ is another vertex of
$\alpha$, then by definition $m_R( i,j ) = 1$.  The diagonal $( i,j )$
crosses none of the diagonals in $R$, so $\Ext^1_{ \sC }\big( R,( i,j
) \big) = 0$ by Equation \eqref{equ:cross}.  That is, $G \big( ( i,j )
\big) = 0$ so $\rho_R \big( ( i,j ) \big) = 1$, verifying Equation
\eqref{equ:B}.

If $\alpha$ is a piece not containing $i$, then we can assume that
there is a piece $\alpha'$ sharing an edge $S = ( k,\ell )$ with
$\alpha$, such that if $j$ is a vertex of $\alpha'$ then Equation
\eqref{equ:B} has already been verified, and such that if $j \neq k,
\ell$ is a vertex of $\alpha$ then $M = ( i,j )$ crosses $S$.  For
such a $j$,
\[
  \dim_{ \BC } \Ext^1_{ \sC }( M,S )
  = \dim_{ \BC } \Ext^1_{ \sC }( S,M )
  = 1
\]
by Equation \eqref{equ:cross}, and there are non-split extensions
\[
  M \rightarrow A \oplus A' \rightarrow S
\;\;,\;\;
  S \rightarrow B \oplus B' \rightarrow M
\]
in $\sC$ where $A,A',B,B' \in \ind\, \sC$ are the diagonals in Figure 
\ref{fig:1}.  
\begin{figure}
\[
  \begin{tikzpicture}[auto]

    \node[name=s, shape=regular polygon, regular polygon sides=25, minimum size=6.5cm, draw] {}; 
    \draw[thick] (s.corner 16) to node[near end] {$S$} (s.corner 25);
    \draw[thick] (s.corner 9) to node {$M$} (s.corner 21);
    \draw[thick] (s.corner 9) to node {$A'$} (s.corner 25);
    \draw[thick] (s.corner 21) to node {$B$} (s.corner 25);
    \draw[thick] (s.corner 16) to node {$B'$} (s.corner 9);
    \draw[thick] (s.corner 16) to node {$A$} (s.corner 21);

    \draw[shift=(s.corner 9)] node[left] {$j$};
    \draw[shift=(s.corner 25)] node[above] {$k$};
    \draw[shift=(s.corner 16)] node[below] {$\ell$};
    \draw[shift=(s.corner 21)] node[right] {$i$};

  \end{tikzpicture} 
\]
\caption{There are non-split extensions $M \rightarrow A \oplus A'
  \rightarrow S$ and $S \rightarrow B \oplus B' \rightarrow M$ in
  $\sC( A_n )$.}
\label{fig:1}
\end{figure}
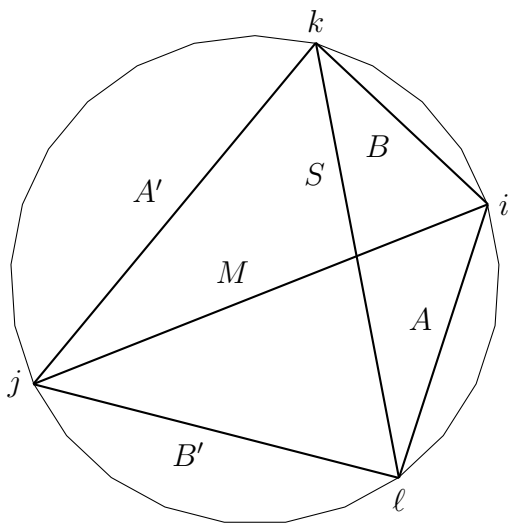
Note that if one or more of $A,A',B,B'$ are edges of $P$, then they
must be interpreted as zero objects, and that $S$ is a diagonal in
$R$, that is, an indecomposable summand of $R$.

Combining Proposition \ref{pro:extension} and Theorem \ref{thm:A}(ii)
gives
\begin{equation}
\label{equ:rho}
  \rho_R( M )  = \rho_R( A )\rho_R( A' ) + \rho_R( B )\rho_R( B' ).
\end{equation}
Since $j$, $k$, $\ell$ are vertices of $\alpha$, the diagonals $A' = (
j,k )$ and $B' = ( j,\ell )$ cross none of the diagonals in $R$, so
$GA' = GB' = 0$ by Equation \eqref{equ:cross}, and hence $\rho_R( A' )
= \rho_R( B' ) = 1$.  Equation \eqref{equ:rho} therefore reads
$\rho_R( M ) = \rho_R( A ) + \rho_R( B )$, giving the first of the
following equalities.
\[
  \rho_R \big( ( i,j ) \big)
  = \rho_R \big( ( i,\ell ) \big) + \rho_R \big( ( i,k ) \big)
  = m_R( i,\ell ) + m_R( i,k )
  = m_R( i,j )
\]
The second equality is by assumption since $k,\ell$ are vertices of
$\alpha'$, and the third equality is Equation \eqref{equ:BHJ}.

This shows Equation \eqref{equ:B} for the vertices $j$ of $\alpha$,
completing the induction.
\end{proof}

\begin{Remark}
\label{rmk:extension3}
Consider the situation of Setup \ref{set:extension}.  Remark
\ref{rmk:extension2} proved Equation \eqref{equ:extension2} for $r \in
\ind\, \sR$.

The remark claimed that if $r \in \ind\, \sC$ then Equation
\eqref{equ:extension2} may fail.  We can now prove this: if it did
always hold, then for $\sC = \sC( A_n )$ we could let the extensions
in Setup \ref{set:extension} be
\[
  ( i-1,j-1 )
  \rightarrow ( i-1,j ) \oplus ( i,j-1 )
  \rightarrow ( i,j )
\;\;,\;\;
  ( i,j )
  \rightarrow 0
  \rightarrow ( i-1,j-1 )
\]
where the first is the AR triangle ending in $( i,j )$ and the second
has connecting morphism equal to the identity on $( i-1,j-1 )$.  Then
Equation \eqref{equ:extension2} would give
\[
  \rho_R \big( ( i-1,j-1 ) \big) \rho_R \big( ( i,j ) \big)
  = \rho_R \big( ( i-1,j ) \big) \rho_R \big( ( i,j-1 ) \big) + 1,
\]
and Theorem \ref{thm:B} would imply that the difference
\eqref{equ:difference} was always $1$.  That is false, however; see
\cite[thm.\ 5.1(c)]{BHJ}.
\end{Remark}

\medskip
\noindent
{\bf Acknowledgement.}
This work grew out of \cite{BHJ}.  We are grateful to Christine
Bessenrodt for the fruitful collaboration on that paper.

We are grateful for very useful input from Yann Palu and Pierre-Guy
Plamondon.  In particular, Pierre-Guy Plamondon visited Newcastle in
March 2014 and his comments to the second author led to significant
improvements of the paper.

We thank the referee for a number of useful comments.

Part of this work was carried out while Peter J\o rgensen was visiting
Hannover.  He thanks Christine Bessenrodt, Thorsten Holm, and the
Institut f\"{u}r Algebra, Zahlentheorie und Diskrete Mathematik at the
Leibniz Universit\"{a}t for their hospitality.  He also gratefully
acknowledges financial support from Thorsten Holm's grant HO 1880/5-1,
which is part of the research priority programme SPP 1388 {\em
  Darstellungstheorie} of the Deutsche Forschungsgemeinschaft (DFG).

\end{document}